\newtheorem{theorem}{Theorem}[section]
\newtheorem{lemma}[theorem]{Lemma}
\theoremstyle{definition}
\newtheorem{definition}[theorem]{Definition}
\theoremstyle{remark}
\newtheorem{remark}[theorem]{\upshape\bfseries Remark}
\newtheorem{example}[theorem]{\upshape\bfseries Example}
\newcommand{\C}{\mathbb{C}}
\newcommand{\R}{\mathbb{R}}
\renewcommand{\H}{\mathbb{H}}
\renewcommand{\DH}{\mathbb{DH}}
\newcommand{\eps}{\varepsilon}
\newcommand{\SE}[1][3]{\operatorname{SE}(#1)}
\newcommand*{\qi}{\mathbf{i}}
\newcommand*{\qj}{\mathbf{j}}
\newcommand*{\qk}{\mathbf{k}}
\newcommand*{\ci}{\mathrm{i}}
\newcommand*{\Cj}[1]{{#1}^\ast}
\renewcommand{\vec}[1]{\mathbf{#1}}
\newcommand{\eqnum}{\refstepcounter{equation}\textup{\tagform@{\theequation}}}
\DeclareMathOperator{\rank}{rank}
\DeclareMathOperator{\Span}{span}
\begin{document}

\begin{frontmatter}
\journal{Computer Aided Geometric Design}

\title{Rational Framing Motions and\\Spatial Rational Pythagorean Hodograph Curves}

\author[1]{Bahar Kalkan}
\ead{baharkalkan@yyu.edu.tr}
\address[1]{Van Yuzuncu Yil University, Department of Mathematics, Van, Turkey, 65090}

\author[2]{Daniel F. Scharler\fnref{fn1}}
\ead{daniel.scharler@uni-graz.at}
\address[2]{Universität Graz, Institute of Mathematics and Scientific Computing,
  Heinrichstr.~4, 8010 Graz, Austria}
\fntext[fn1]{Daniel F. Scharler contributed significantly to the originally
  submitted version of this manuscript. He tragically passed away in April 2022
  at the age of only 29.}

\author[3]{Hans-Peter Schröcker}
\ead{hans-peter.schroecker@uibk.ac.at}
\address[3]{Universität Innsbruck, Department of Basic Sciences in Engineering Sciences, Technikerstr.~13, 6020 Innsbruck, Austria}

\author[4]{Zbyněk Šír}
\ead{zbynek.sir@karlin.mff.cuni.cz}
\address[4]{Charles University, Faculty of Mathematics and Physic, Sokolovsk\'a 83, Prague 186 75, Czech Republic}

\begin{abstract}
  We propose a new method for constructing rational spatial Pythagorean Hodograph (PH) curves based on determining a suitable rational framing motion. While the spherical component of the framing motion is arbitrary, the translation part is determined be a modestly sized and nicely structured system of linear equations. Rather surprisingly, generic input data will only result in polynomial PH curves. We provide a complete characterization of all cases that admit truly rational (non-polynomial) solutions. Examples illustrate our ideas and relate them to existing literature.
\end{abstract}

\begin{keyword}
	PH curve; rational curve; tangent indicatrix; Euler-Rodrigues frame; rational motion; motion polynomial; spherical motion; cusp; inflection
\end{keyword}

\end{frontmatter}

\section{Introduction}
\label{sec:intro}

The distinctive property of a polynomial Pythagorean-hodograph (PH) curve is that its parametric speed, which specifies the rate of change of arc length with respect to the curve parameter, is a polynomial rather than the square root of a polynomial. This feature endows PH curves with many computational advantages in geometric design, motion control, animation, path planning, and similar applications.

Planar PH curves were first introduced in \cite{farouki90c}, through direct integration of hodograph components expressed in terms of real preimage polynomials. Subsequently, a complex-variable model for planar PH curves was introduced in \cite{farouki94}, that greatly facilitates the development of algorithms for their construction and analysis, see e.g. \cite{farouki08} for an extensive bibliography.

Spatial PH curves were first considered in \cite{farouki94a} using a three polynomial preimage. A characterization of Pythagorean polynomial quadruples in terms of four polynomials was presented, in a different context, in \cite{Dietz}. Subsequently, two algebraic models for spatial PH curves were based on this characterization: The quaternion and Hopf map representations, as proposed in \cite{choi02b}. These forms are rotation invariant \cite{farouki02a} and serve as the foundation for many practical constructions of spatial PH curves and associated frames \cite{farouki08,faroukisurv}.

The extension from polynomial to rational PH curves is non-trivial, since a rational hodograph does not always yield a rational curve upon integration. To circumvent this difficulty, a different approach was adopted in \cite{fiorot94,Pottmann} based on the dual (line) representation of planar curves. Specifically, plane curves are viewed as the envelopes of one-parameter families of tangent lines, rather than point loci. By assigning rational functions to describe the orientation of these tangents and their distance from the origin, one can construct rational PH curves in a geometrically intuitive manner \cite{pottmann95b}. A comparison of polynomial and rational planar PH curves can be found in \cite{farouki96c}.

The generalization of rational PH curves from the planar case to the spatial case is also non-trivial, since the dual line representation cannot be simply carried over: In the plane, points and lines are dual elements, but in space the duality is between points and planes, while lines are self-dual elements. This problem was solved in \cite{FaroukiSir} and a construction in two steps was given. In the first step a rational tangent indicatrix of the curve is constructed via stereographic projection. In the second step the osculating planes are obtained with one degree of freedom depending rationally on the parameter. Exploiting the property that every spatial curve has an associated tangent developable surface, the PH curve is determined from its tangent developable as the singular locus, or edge of regression. In \cite{krajnc1,FaroukiSir2} the first step was modified and the tangent indicatrix was described by a quaternion valued polynomial. In this way a comparatively simpler formula was provided (c.f. Equation~\eqref{eq:tr2} below). While this formula is quite elegant and provides a full description of all rational PH curves, it is difficult to have an insight in the possible (and frequent) cancellations between the numerator and denominator.

In \cite{FaroukiSir} the theory of rational PH curves was also connected to the theory of polynomial ones, in particular concerning the construction of rotation minimizing frames. Other results on rational spatial PH curves include \cite{barton} where spherical rational curves with rational rotation minimizing frame are constructed by applying Möbius transformations in $\R^3$ to piecewise planar PH cubics. In the series of papers \cite{krajnc1,krajnc2,krajnc3} the authors exploit the dual representation to interpolate with rational spatial PH curves of low class. In \cite{FaroukiSakkalis2019} a special form of the rational hodograph is used to construct planar rational PH curves with rational arc-length function. A generalization of this idea to the spatial curves is also suggested and shown on one example.

Our novel approach to rational PH curve starts with the observation that their rational tangent indicatrix gives rise to a rational spherical motion. When composed with the translation along the curve, it is a framing motion of the curve. The spherical component has been called Euler-Rodrigues motion or, equivalently referring to the images of a suitable orthogonal tripod, Euler-Rodrigues frame \cite{choi02,krajnc1}.

Writing the framing motion of the yet undetermined rational PH curve in the dual quaternion model of space kinematics, c.f. \cite{husty12} or \cite[Chapter~11]{selig05}, separating spherical and translational motion component, and selecting appropriate parameters (the spherical motion component, represented by a quaternion polynomial $A \in \H[t]$ and the PH curve's denominator polynomial $\alpha$) the PH conditions can be expressed by a system of linear equations.

By design, this approach will produce all rational PH curves of a certain maximal degree. In particular, it comprises the usual construction of polynomial PH curves from $A$ via integration. Rather surprisingly, for a generic choice of $A$ and $\alpha$ only polynomial solutions can be obtained. A detailed analysis of the system of linear equations yields a complete characterization of those cases that admit truly rational (non-polynomial) solutions (Theorem~\ref{th:non-polynomial-solutions}) in terms of the coefficients of the Taylor polynomial of the non-normalized tangent indicatrix at a zero of $\alpha$ and the multiplicity $n$ of this zero.

Our approach not only provides a straightforward and direct method to compute rational PH curves and also polynomial PH curves from the same data. It also sheds new light on rational PH curves and on their relation to polynomial PH curves. Via $A$ and $\alpha$, essential properties of the resulting family of rational PH curves can be controlled. This and the vector space structure of the solution family is assumed to be beneficial for many applications.

We continue this article by providing some background on the construction of
rational and polynomial PH curves and on the dual quaternion model of space
kinematics in Section~\ref{sec:preliminaries}. In
Section~\ref{sec:rational-PH-curves} we describe in detail how to compute
rational PH curves from the spherical component of a rational framing motion and
establish some basic properties of the construction. Existence of non-polynomial
PH curves to given input data is discussed in detail in
Section~\ref{sec:existence}.
In Section~\ref{sec:examples} we demonstrate how to compute some new and some
known examples of rational PH curves and we relate computation methods from
literature to our approach.

\section{Preliminaries}
\label{sec:preliminaries}

In this section we recall basic definitions and known results about PH curves, quaternions, dual quaternions and their relation to space kinematics. Let us start with the definition of PH curves. We suggest the analogous wording for both polynomial and rational cases postponing the necessary differences to remarks below.

\begin{definition}
  \label{def:PH-curve}
	A spatial polynomial (rational) parametric curve $\vec{r}(t)=(r_1(t),r_2(t),r_3(t))$ is called \emph{Pythagorean hodograph (PH)} if it has a polynomial (rational) speed. More precisely if there exists a polynomial (rational) function $\sigma(t)$ so that
  \begin{equation}
    \label{def}
    r_1'(t)^2+r_2'(t)^2+r_3'(t)^2=\sigma(t)^2.
  \end{equation}
\end{definition}

Note that there is an important difference between the polynomial and the rational case. Unlike the polynomial PH curves, the rational PH curves typically do not possess a rational arc-length function. Indeed, the primitive function of the speed $\int \sigma(t){\rm d}t$ is rational only in special cases \cite{FaroukiSakkalis2019}. This might be one of the reasons why in the plane rational PH curves are often defined as the curves having rational offsets \cite{Pottmann}. We will suggest later that for space curves the analogous geometric property might be the existence of a related framing motion.

Let us also stress that the property to be PH may depend on the parameterization. As a well known example consider the parabola $\vec{r}(s)=(s, s^2)$ which is not PH while its rational reparameterization $\vec{r}(t)=(\frac{t}{1-t^2},\frac{t^2}{(1-t^2)^2})$ is PH with speed function $\sigma(t)=\frac{(1+t^2)^2}{(1-t^2)^3}$. Therefore we consider the PH property as an aspect of the particular parameterization rather than of the geometric object. For this reason we will not use non-linear reparameterizations.

Equation \eqref{def} implies existence of the unit vector field
\begin{equation*}
  \vec{T}(t)=\biggl( \frac{r'_1(t)}{\sigma(t)}, \frac{r'_2(t)}{\sigma(t)}, \frac{r'_3(t)}{\sigma(t)} \biggr)
\end{equation*}
tangent to the curve. $\vec{T}(t)$ is defined for all $t\in \R$, possibly with exception of finitely many values. It can be seen as a rational spherical curve called \emph{tangent indicatrix}. In fact, existence of a \emph{rational} tangent indicatrix is equivalent to the PH condition for rational curves.

Next, we introduce the algebra of dual quaternions and the concept of representing motions via certain dual quaternion polynomials, c.f. \cite{husty12} or \cite[Chapter~11]{selig05}. The skew field of quaternions $\H$ is the associative real algebra generated by the four basis elements $1$, $\qi$, $\qj$, $\qk$ together with the multiplication derived from the generating relations
\begin{equation*}
  \qi^2 = \qj^2 = \qk^2 = \qi\qj\qk = -1.
\end{equation*}
\emph{Conjugation} of a quaternion $\mathcal Q = q_0 + q_1 \qi + q_2 \qj + q_3 \qk \in
\H$ is defined by changing signs of the coefficients at the complex units, i.e.
$\Cj{\mathcal Q} = q_0 - q_1 \qi - q_2 \qj - q_3 \qk$, and the norm of $q$ is given by
$\mathcal Q\Cj{\mathcal Q} = q_0^2 + q_1^2 + q_2^2 + q_3^2 \in \R$. The algebra $\DH$ of dual
quaternions is obtained by adjoining an element $\eps$ which commutes with
quaternions and has the property $\eps^2 = 0$. A dual quaternion can be written
as $\mathcal H = \mathcal P + \eps \mathcal D$ where \emph{primal part $\mathcal P$} and \emph{dual part $\mathcal D$} are
quaternions. The dual quaternion conjugate of $\mathcal H$ is defined as $\Cj{\mathcal H} = \Cj{\mathcal P}
+ \eps \Cj{\mathcal D}$ and its norm is $\mathcal H\Cj{\mathcal H} = \mathcal P\Cj{\mathcal P} + \eps(\mathcal P\Cj{\mathcal D} + \mathcal D\Cj{\mathcal P})$.
Note that the norm is a dual number in general and a real number if the
\emph{Study condition} $\mathcal P\Cj{\mathcal D} + \mathcal D\Cj{\mathcal P} = 0$ is fulfilled.
In this case (and assuming $\mathcal P \neq 0$), the multiplicative inverse of $\mathcal H$ is given by $\mathcal H^{-1} = \Cj{\mathcal H}/(\mathcal P\Cj{\mathcal P})$.

Denote by $\DH^\times$ the group of invertible dual quaternions $\mathcal H = \mathcal P + \eps \mathcal D$ that satisfy the \emph{Study condition $\mathcal P\Cj{\mathcal D} + \mathcal D\Cj{\mathcal P} = 0$.} The factor group $\DH^\times/\R^\times$ of $\DH^\times$ modulo the real multiplicative group $\R^\times$ is isomorphic to the group $\SE$ of rigid body displacements. The dual quaternion $\mathcal H = \mathcal P + \eps \mathcal D \in \DH^\times$ acts on the point with Cartesian coordinates $(x_1,x_2,x_3)$, embedded into $\DH$ as $\vec{x} = 1 + \eps(x_1\qi + x_2\qj + x_3\qk)$, via
\begin{equation}
  \label{eq:dq-action}
  \vec{x} \mapsto
  \frac{(\mathcal P - \eps \mathcal D) \vec{x} (\Cj{\mathcal P} + \eps \Cj{\mathcal D})}{\mathcal P\Cj{\mathcal P}}.
\end{equation}

Equation~\eqref{eq:dq-action} describes a single rigid-body displacement in dual quaternions. In order to obtain a motion, primal part $\mathcal P$ and dual part $\mathcal D$ (and hence also $\mathcal H = \mathcal P + \eps \mathcal D$) should depend on a real motion parameter $t$. Since we are interested in rational motions, we replace $\mathcal H$ by a \emph{dual quaternion polynomial $\mathcal C(t) = \mathcal A(t) + \eps \mathcal B(t) \in \DH[t]$} in the real variable $t$ that satisfies the polynomial Study condition $\mathcal A(t)\Cj{\mathcal B}(t)+\mathcal B(t)\Cj{\mathcal A}(t) = 0$ and $\mathcal A(t) \neq 0$. Polynomials of this type are called \emph{motion polynomials} \cite{hegedus13}. Clearly, the trajectory
\begin{equation}
  \label{eq:trajectory}
  \frac{(\mathcal A(t) - \eps \mathcal B(t))x(\Cj{\mathcal A}(t) + \eps \Cj{\mathcal B}(t))}{\mathcal A(t)\Cj{\mathcal A}(t)}
\end{equation}
of a point $x$ is a rational curve and it is well-known that all \emph{rational motions} (that is, motions where all trajectories are rational curves) can be generated in that way~\cite{juettler93}.

The action \eqref{eq:dq-action} of dual quaternions (and also the action of motion polynomials) on points can be extended to vectors. Their images depend only on the spherical component $\mathcal A(t)$ of the motion $\mathcal C(t) = \mathcal A(t) + \eps \mathcal B(t)$. Embedding the vector $x = (x_1,x_2,x_3)$ into $\DH$ as the \emph{vectorial} quaternion $\vec x = x_1\qi + x_2\qj + x_3\qk$, the image of $x$ is given by $\mathcal A(t)\vec x\Cj{\mathcal A}(t)/(\mathcal A(t)\Cj{\mathcal A}(t))$.

Quaternion polynomials have been proven to be very useful in order to
systematically construct polynomial PH curves. All spatial polynomial PH curves can be obtained by taking an arbitrary quaternion valued polynomial $\mathcal A(t)$ and a real polynomial $\lambda(t)$ and defining
\begin{equation}
  \label{racint}
  \vec{r}(t)=\int \lambda(t)\mathcal A(t) \qi \Cj{\mathcal A}(t)\,\mathrm{d}t,
\end{equation}
\cite{farouki08}. In principle all rational PH curves can be obtained in the same way by allowing $\lambda(t)$ to be rational \cite{FaroukiSir,FaroukiSir2, FaroukiSakkalis2019}. The integral \eqref{racint} however does not need to produce a rational curve and only a (linear) subset of rational functions $\lambda(t)$ may be used. A complete characterization of this subset seems to be a very difficult problem. Note, however, that the linear occurrence of $\lambda(t)$ in \eqref{racint} indicates that, given two rational PH curves $\tilde{\vec{r}}(t)$ and $\hat{\vec{r}}(t)$ with the same indicatrix $\vec{T}(t)$, their sum $\vec{r}(t)=\tilde{\vec{r}}(t)+\hat{\vec{r}}(t)$ has the same indicatrix and is therefore PH as well. Also for any constant real $c \in \R$ and any constant vector $\vec{d} \in \R^3$ the scaled and translated curve $c \vec{r}(t) + \vec{d}$ has the same indicatrix.

An explicit construction of a general rational PH curve bypassing the integration issue was given in \cite{FaroukiSir,krajnc1,FaroukiSir2}. Consider any quaternion valued polynomial $\mathcal A(t)$ and define the vector fields $\vec{v}(t)=\mathcal A(t)\qi \Cj{\mathcal A}(t)$ and $\vec{u}(t)=\vec{v}(t)\times\vec{v}'(t)$. While $\vec{v}(t)$ points in the tangent direction of the curve (it is a multiple of $\vec{T}(t)$) the field $\vec{u}(t)$ is perpendicular to the osculation plane (it is a multiple of the binormal vector). Then the osculation plane will have the implicit equation $\vec{u}(t)\cdot(x_1,x_2,x_3)=f(t)$ where $f(t)$ is an arbitrary rational function. The curve parameterization in terms of $\vec{u}(t)$ and $f(t)$ has the form
\begin{equation}
  \label{eq:tr2}
  \vec{r}(t)=\frac{f(t)   \, \vec{u}'(t)\times \vec{u}''(t) +
    f'(t)  \, \vec{u}''(t)\times \vec{u}(t) +
    f''(t) \, \vec{u}(t)\times \vec{u}'(t)}
  {\det[\,\vec{u}(t),\vec{u}'(t),\vec{u}''(t)\,]}.
\end{equation}
Note that for a fixed $A(t)$ the correspondence between $f(t)$ and $\vec{r}(t)$ is linear. While this formula is quite elegant and provides a full description of all the rational PH curves, it is difficult to have an insight in the possible cancellations between the numerator and denominator. This is due to the fact that the final denominator of $\vec{r}(t)$ comes both from $\det[\,\vec{u}(t),\vec{u}'(t),\vec{u}''(t)\,]$ and from the denominator of $f(t)$ and its derivatives. In particular it is difficult to understand for which $f(t)$ polynomial PH curves occur (as subset of the rational cases).

\section{Rational PH Curves and Framing Motions}
\label{sec:rational-PH-curves}

We are going to study PH curves as trajectories of rational framing motions and we will use the formalism of dual quaternions introduced in the previous section for that purpose. For simplicity of notation we will usually omit the indeterminate $t$ from now on, e.g. we write $\vec{r} = \vec{r}(t)$ or $\mathcal A = \mathcal A(t)$.

\begin{lemma}
  \label{lem:framing-motion}
  Given a rational parametric curve
  \begin{equation*}
    \vec{r} =
    \frac{1}{\alpha}
    (r_1, r_2, r_3),
    \quad
    \alpha, r_1, r_2, r_3 \in \R[t],
  \end{equation*}
  any rational motion such that the trajectory of the origin equals $\vec{r}$ is of the form
  \begin{equation}
    \label{eq:draw-curve}
    \mathcal C = (\alpha + \eps \vec b)\mathcal A
  \end{equation}
  where $\mathcal A \in \H[t] \setminus \{0\}$ is arbitrary and $\vec b \in \H[t]$ is of the shape $\vec b = -\frac{1}{2}(r_1\qi + r_2\qj + r_3\qk)$.
\end{lemma}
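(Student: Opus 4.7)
The plan is to start from the most general form of a rational motion, impose that the trajectory of the origin equals $\vec r$, and then read off the claimed factorization. By the discussion preceding Equation~\eqref{eq:trajectory}, any rational motion is given by a motion polynomial $\mathcal C = \mathcal A + \eps \mathcal B$ with $\mathcal A \neq 0$ and satisfying the Study condition $\mathcal A\Cj{\mathcal B} + \mathcal B\Cj{\mathcal A} = 0$. Substituting the embedded origin $\vec x = 1$ into \eqref{eq:trajectory} and using the Study condition to collapse $\mathcal A\Cj{\mathcal B} - \mathcal B\Cj{\mathcal A}$ to $2\mathcal A\Cj{\mathcal B}$, the trajectory of the origin becomes $1 + \eps\,\cdot\,2\mathcal A\Cj{\mathcal B}/(\mathcal A\Cj{\mathcal A})$. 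Equating this with the embedded target point $1 + \eps\cdot\tfrac{1}{\alpha}\vec p$, where $\vec p := r_1\qi + r_2\qj + r_3\qk$ denotes the vectorial quaternion representing $\vec r$, reduces the whole problem to the single quaternion equation
\begin{equation*}
  2\alpha\,\mathcal A\Cj{\mathcal B} = \mathcal A\Cj{\mathcal A}\,\vec p.
\end{equation*}

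In the second step I would solve this equation for $\mathcal B$. Left-multiplying by $\mathcal A^{-1} = \Cj{\mathcal A}/(\mathcal A\Cj{\mathcal A})$ gives $\Cj{\mathcal B} = \tfrac{1}{2\alpha}\Cj{\mathcal A}\vec p$, and taking quaternion conjugates (together with $\Cj{\vec p} = -\vec p$, as $\vec p$ is purely vectorial) yields $\mathcal B = -\tfrac{1}{2\alpha}\vec p\,\mathcal A$. Hence
\begin{equation*}
  \mathcal C = \mathcal A + \eps\mathcal B = \tfrac{1}{\alpha}(\alpha + \eps\vec b)\mathcal A, \qquad \vec b = -\tfrac{1}{2}\vec p.
\end{equation*}
Since the trajectory formula~\eqref{eq:trajectory} is homogeneous of degree zero in $\mathcal C$, two motion polynomials differing by a nonzero real polynomial factor describe the same rational motion. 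Multiplying the representative above by $\alpha$ is therefore harmless and brings $\mathcal C$ into the polynomial form $(\alpha + \eps\vec b)\mathcal A$ claimed in the lemma.

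Finally, I would close the argument with a short converse check: for every $\mathcal A\in\H[t]\setminus\{0\}$ the polynomial $(\alpha + \eps\vec b)\mathcal A$ really is a motion polynomial whose origin trajectory is $\vec r$. The Study condition is a one-line verification, since the primal and dual parts $\alpha\mathcal A$ and $\vec b\mathcal A$ produce the Study expression $\alpha\,\mathcal A\Cj{\mathcal A}(\Cj{\vec b} + \vec b) = 0$, because $\vec b$ is purely vectorial and $\mathcal A\Cj{\mathcal A}\in\R[t]$ is central. The origin trajectory is then recovered by running the computation of the first paragraph backwards. The freedom in $\mathcal A$ matches the geometric expectation: the trajectory of a single point does not constrain the orientation of the moving frame, so any spherical component may be adjoined. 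The only mildly subtle step is the normalization in the second paragraph, where one has to invoke the projective character of motion polynomials in order to pass from the rational expression for $\mathcal C$ to the polynomial form stated in the lemma; everything else reduces to routine dual-quaternion algebra.
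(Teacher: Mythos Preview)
Your proof is correct and follows essentially the same route as the paper: both compute the origin trajectory from \eqref{eq:trajectory} using the Study condition, solve for the dual part, and observe that the result differs from $(\alpha+\eps\vec b)\mathcal A$ only by the real polynomial factor~$\alpha$. The paper merely presents the two directions in the opposite order and phrases the converse via an auxiliary factor $\lambda$ with $\mathcal P\Cj{\mathcal P}=\lambda\alpha$, but this is algebraically equivalent to your direct identity $\alpha\mathcal C=(\alpha+\eps\vec b)\mathcal A$.
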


\begin{proof}
  By \eqref{eq:trajectory}, the translation along $\vec{r}$ is given by the motion polynomial $\alpha + \eps \vec b$ where $\vec b$ is defined as in the Lemma's statement. For $x = 1$ and $\vec b = 0$, \eqref{eq:trajectory} boils down to $1$. Since the origin is embedded into $\DH$ as $1$, we see that $A$ fixes the origin and the motion polynomial $\mathcal C$ as defined above indeed moves it along $\vec{r} = -2\alpha^{-1}\vec b$.

  Conversely, consider a motion polynomial $\mathcal C = \mathcal P + \eps \mathcal D \in \DH[t]$ and
  assume that $\vec{r} = \alpha^{-1}(r_1,r_2,r_3)$ is the trajectory of the
  origin. The trajectory of the origin under the motion $\mathcal C$ is
  $(\mathcal P\Cj{\mathcal D}-\mathcal D\Cj{\mathcal P})/(\mathcal P\Cj{\mathcal P})$ which, by the Study condition $\mathcal P\Cj{\mathcal D} + \mathcal D\Cj{\mathcal P}
  = 0$, equals $-2\mathcal D\Cj{\mathcal P}/(\mathcal P\Cj{\mathcal P})$. From this we infer that $\alpha$ divides
  $\mathcal P\Cj{\mathcal P}$, i.e., $\mathcal P\Cj{\mathcal P} = \lambda\alpha$ with $\lambda \in \R[t]$, whence
  $\lambda \vec b = \mathcal D\Cj{\mathcal P}$. Defining $\mathcal A \coloneqq \mathcal P$ we have
  \begin{equation*}
    (\alpha + \eps \vec b)\mathcal A
    = \alpha \mathcal P + \eps \vec b\mathcal P
    = \alpha \mathcal P + \eps \tfrac{1}{\lambda}D\underbrace{\Cj{\mathcal P}\mathcal P}_{=\lambda\alpha}
    = \alpha(\mathcal P + \eps \mathcal D)
    = \alpha \mathcal C
  \end{equation*}
  so that the two motion polynomials $\mathcal C$ and $(\alpha + \eps \vec b)\mathcal A$ are indeed equal up to a real polynomial factor and therefore represent the same motion.
\end{proof}

Note that we can assume that $\mathcal A$ is reduced, that is, it does not have a real polynomial factor of positive degree. If common real factors of $\alpha$ and $\vec b\mathcal A$ appear, they can be divided off from $\mathcal C$ in order to obtain a motion polynomial of lower degree which looks different from \eqref{eq:draw-curve}. Nonetheless, \eqref{eq:draw-curve} is the general way of writing rational motions to draw the parametric rational curve $\vec{r}$ as trajectory of the origin. We will use precisely this form to describe all the rational PH curves.

\begin{lemma}
  \label{lem:vectorial}
  The polynomial $\mathcal C = (\alpha + \eps \mathcal B)\mathcal A$ with $\alpha \in \R[t]$ and $\mathcal A$, $\mathcal B \in \H[t]$ satisfies the Study condition if and only if $\mathcal B$ is vectorial (and will be denoted $\vec b$).
\end{lemma}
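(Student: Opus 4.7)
The plan is to reduce the Study condition to an algebraic identity involving only $\mathcal{B}+\Cj{\mathcal{B}}$ by direct computation. Writing $\mathcal{C} = \mathcal{P} + \eps\mathcal{D}$ with $\mathcal{P} = \alpha\mathcal{A}$ and $\mathcal{D} = \mathcal{B}\mathcal{A}$, I would first form the two terms appearing in the Study condition
\begin{equation*}
  \mathcal{P}\Cj{\mathcal{D}} + \mathcal{D}\Cj{\mathcal{P}}
  = \alpha\mathcal{A}\,\Cj{\mathcal{A}}\Cj{\mathcal{B}} + \mathcal{B}\mathcal{A}\,\Cj{\mathcal{A}}\alpha,
\end{equation*}
using the antimultiplicativity of quaternion conjugation and the fact that $\alpha \in \R[t]$ is its own conjugate.

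The key observation is that $\alpha$ is real and that the norm polynomial $\mathcal{A}\Cj{\mathcal{A}} \in \R[t]$ is also real, so both are central in $\H[t]$. Factoring them out on the left, the expression collapses to
\begin{equation*}
  \mathcal{P}\Cj{\mathcal{D}} + \mathcal{D}\Cj{\mathcal{P}}
  = \alpha\,\mathcal{A}\Cj{\mathcal{A}}\,(\mathcal{B} + \Cj{\mathcal{B}}).
\end{equation*}
Since $\mathcal{B} + \Cj{\mathcal{B}}$ is twice the scalar part of $\mathcal{B}$, the Study condition is equivalent to the vanishing of this scalar part, provided $\alpha \neq 0$ and $\mathcal{A} \neq 0$ (both implicit in the setup, as otherwise $\mathcal{C}$ fails to be admissible as a motion polynomial). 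This vanishing is by definition the statement that $\mathcal{B}$ is vectorial, establishing both implications simultaneously.

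I do not anticipate any real obstacle: the argument is essentially a one-line manipulation once the centrality of $\alpha$ and $\mathcal{A}\Cj{\mathcal{A}}$ is invoked. The only subtle point worth flagging in the write-up is the tacit nondegeneracy assumption $\alpha\mathcal{A}\Cj{\mathcal{A}} \neq 0$, which is needed to conclude $\mathcal{B}+\Cj{\mathcal{B}} = 0$ from the vanishing of the whole product in the forward direction; in the reverse direction, vectoriality of $\mathcal{B}$ forces the product to vanish without any such assumption.
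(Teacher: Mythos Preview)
Your proof is correct and essentially identical to the paper's: both compute the Study condition for $\mathcal{P}=\alpha\mathcal{A}$, $\mathcal{D}=\mathcal{B}\mathcal{A}$ and use the centrality of $\alpha$ and $\mathcal{A}\Cj{\mathcal{A}}$ to factor it as $\alpha\,\mathcal{A}\Cj{\mathcal{A}}\,(\mathcal{B}+\Cj{\mathcal{B}})$. Your explicit remark on the nondegeneracy assumption $\alpha\,\mathcal{A}\Cj{\mathcal{A}}\neq 0$ is a welcome clarification that the paper leaves implicit.
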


\begin{proof}
  The Study condition boils down to $0 = \alpha \mathcal A \Cj{(\mathcal B\mathcal A)} + \mathcal B\mathcal A \Cj{(\alpha \mathcal A)} = \alpha \mathcal A\Cj{\mathcal A} (\mathcal B + \Cj{\mathcal B})$. This implies $\mathcal B + \Cj{\mathcal B} = 0$, that is $\mathcal B$ is vectorial.
\end{proof}

\begin{definition}
  \label{def:framing}
  A motion polynomial $\mathcal C = (\alpha + \eps \vec b)\mathcal A$ (or the corresponding rational motion) is called \emph{framing} if the image $\mathcal A \qi \Cj{\mathcal A}$ of the vector $\qi$ is tangent to the trajectory $\vec{r}$ of the origin.
\end{definition}

Before we elucidate the relation of the framing motions to the PH curves let us give one more useful definition.

\begin{definition}
  We call the polynomial $\mathcal A \in \H[t]$ \emph{reduced with respect to $\qi \in \H$} if it is free of real factors of positive degree and of polynomial right factors with coefficients in the sub-algebra of $\H$ which is generated by $1$ and~$\qi$.
\end{definition}

\begin{theorem}
  \label{th:PH-framing}
  If $\mathcal C = (\alpha + \eps \vec b)\mathcal A$ is a framing motion then the trajectory $\vec{r}$ of the origin is a PH curve. Conversely, if $\vec{r}$ is a PH curve, there exists a framing motion $\mathcal C = (\alpha + \eps \vec b)\mathcal A$ of $\vec{r}$ where $\mathcal A$ is reduced with respect to~$\qi$.
\end{theorem}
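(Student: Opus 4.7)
The plan is to prove the two implications separately.

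For the forward direction, assume $\mathcal C = (\alpha + \eps\vec b)\mathcal A$ is a framing motion. Lemma~\ref{lem:framing-motion} identifies the trajectory of the origin as $\vec r = -2\vec b/\alpha$. The framing condition of Definition~\ref{def:framing} says that $\vec r'$ is parallel to the vectorial quaternion $\mathcal A\qi\Cj{\mathcal A}$, so there is a rational function $\mu\in\R(t)$ with $\vec r' = \mu\,\mathcal A\qi\Cj{\mathcal A}$. Taking squared norms and using $\mathcal A\Cj{\mathcal A}\in\R[t]$ together with $|\qi|=1$ gives $|\vec r'|^2 = \mu^2(\mathcal A\Cj{\mathcal A})^2 = (\mu\,\mathcal A\Cj{\mathcal A})^2$, which exhibits a rational Pythagorean speed in the sense of Definition~\ref{def:PH-curve}.

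For the converse, let $\vec r$ be a rational PH curve. Its tangent indicatrix $\vec T = \vec r'/|\vec r'|$ is a rational curve on the unit sphere $S^2$, so it admits a polynomial quaternion preimage under the Euler-Rodrigues map: there exists $\tilde{\mathcal A}\in\H[t]\setminus\{0\}$ with $\vec T = \tilde{\mathcal A}\qi\Cj{\tilde{\mathcal A}}/(\tilde{\mathcal A}\Cj{\tilde{\mathcal A}})$. This lifting is standard in the PH literature and can be realized concretely by pulling back a rational parametrization of $S^2$.

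I then reduce $\tilde{\mathcal A}$ with respect to $\qi$. The key identity is that for any $\mathcal G = p + q\qi$ with $p,q\in\R[t]$, the computation
\begin{equation*}
  \mathcal G\qi\Cj{\mathcal G} = (-q + p\qi)(p - q\qi) = (p^2+q^2)\,\qi
\end{equation*}
shows that $\mathcal G\qi\Cj{\mathcal G}$ is a real polynomial multiple of $\qi$. Hence if $\tilde{\mathcal A} = \mathcal A\cdot\mathcal G$ with $\mathcal G$ a right factor whose coefficients lie in the subalgebra $\R\langle 1,\qi\rangle$ and $\deg\mathcal G>0$, then $\tilde{\mathcal A}\qi\Cj{\tilde{\mathcal A}} = (p^2+q^2)\,\mathcal A\qi\Cj{\mathcal A}$, so stripping off $\mathcal G$ preserves the direction of the tangent indicatrix while strictly decreasing $\deg\tilde{\mathcal A}$; real polynomial right factors are handled as a special case since $\R\subset\R\langle 1,\qi\rangle$. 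Iterating terminates in a polynomial $\mathcal A$ that is reduced with respect to $\qi$ and still satisfies $\vec T = \mathcal A\qi\Cj{\mathcal A}/(\mathcal A\Cj{\mathcal A})$. Forming $\mathcal C = (\alpha + \eps\vec b)\mathcal A$ via Lemma~\ref{lem:framing-motion} then yields a motion polynomial whose origin trajectory is $\vec r$ and in which $\mathcal A\qi\Cj{\mathcal A}$ is parallel to $\vec r'$, i.e.\ a framing motion.

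The main obstacle I anticipate is the lifting step, namely the existence of a \emph{polynomial} quaternion preimage of the rational spherical curve $\vec T$; once this is in hand, the reduction identity above and the invocation of Lemma~\ref{lem:framing-motion} make the remaining verifications essentially routine.
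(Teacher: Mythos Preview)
Your proposal is correct and follows essentially the same route as the paper: the forward direction is the same parallelism-to-rational-speed argument, and the converse is the same lifting of the tangent indicatrix to a quaternion polynomial followed by stripping right factors in $\R\langle 1,\qi\rangle$. Your explicit verification $\mathcal G\qi\Cj{\mathcal G}=(p^2+q^2)\qi$ is a nice addition that the paper leaves implicit, and the paper resolves your anticipated obstacle (existence of a polynomial quaternion preimage for a rational spherical curve) simply by citation to \cite[Theorem~2]{li16}.
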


\begin{proof}
  By Definition~\ref{def:framing}
  The motion $C$ is framing if and only if $\vec{r}'$ is linearly dependent to $\mathcal A \qi \Cj{\mathcal A}$. This implies
  \begin{equation*}
    \frac{\vec{r}'}{\Vert \vec{r}' \Vert} =
    \pm \frac{\mathcal A \qi \Cj{\mathcal A}}{\Vert \mathcal A \qi \Cj{\mathcal A} \Vert} =
    \pm \frac{\mathcal A \qi \Cj{\mathcal A}}{\mathcal A\Cj{\mathcal A}}.
  \end{equation*}
  From this we see that the speed function $\sigma$ of Definition~\ref{def:PH-curve} is rational and $\vec{r}$ is indeed a PH curve.

  Assume now conversely that $\vec{r}=\alpha^{-1} (r_1, r_2, r_3)$ is a rational
  PH curve and set $\vec b = -\frac{1}{2}(r_1\qi + r_2\qj + r_3\qk)$. The PH-property
  implies that the normalized hodograph $\vec{r}'/\Vert \vec{r}' \Vert$ is a
  rational spherical curve. It is well-known (c.f. for example the spherical
  specialization of \cite[Theorem~2]{li16}) that there exists $\mathcal A \in \H[t]$ such
  that $\vec{r}'/\Vert \vec{r}' \Vert = \mathcal A \qi \Cj{\mathcal A} / (\mathcal A\Cj{\mathcal A})$. Moreover
  there exist an $\mathcal A$ which is reduced with respect to $\qi$ and has this
  property. Indeed if $\mathcal R \in \H[t]$ has coefficients in the sub-algebra
  generated by $1$ and $\qi$ and is of maximal degree such that $\tilde{\mathcal A}=\mathcal A\mathcal R$
  then $\mathcal A \qi \Cj{\mathcal A}/(\mathcal A\Cj{\mathcal A}) =\tilde{\mathcal A} \qi \tilde{\Cj{\mathcal A}}/(\tilde{\mathcal A}
  \tilde{\Cj{\mathcal A}})$. Now $\mathcal C = (\alpha + \eps \vec b)\mathcal A$ is the sought framing motion.
\end{proof}

The previous theorem allows us to formulate three equivalent systems of linear equations characterizing rational PH curves.

\begin{theorem}
  \label{th:system-of-equations}
  Rational PH curves are precisely the trajectories of the origin under the
  rational motions $\mathcal C = (\alpha + \eps \vec b)\mathcal A$ where $\mathcal A$ is reduced with respect to $\qi$ and one of the following equivalent conditions is satisfied:
  \begin{enumerate}
  \item $(\alpha \vec b' - \alpha' \vec b) \times \mathcal A \qi \Cj{\mathcal A} = 0$
    \hfill\eqnum\label{eq:cross-product}
  \item $\langle \alpha \vec b' - \alpha'\vec b, \mathcal A \qj \Cj{\mathcal A} \rangle = \langle \alpha \vec b' - \alpha'\vec b, \mathcal A \qk \Cj{\mathcal A} \rangle = 0$
    \hfill\eqnum\label{eq:orthogonality}
  \item There exists $\mu \in \R[t]$ such that $\alpha \vec b' - \alpha' \vec b = \mu \mathcal A \qi \Cj{\mathcal A}$. \hfill\eqnum\label{eq:linear-dependence}
  \end{enumerate}
\end{theorem}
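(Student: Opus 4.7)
The plan is to extract condition~\eqref{eq:cross-product} directly from the framing characterization in Theorem~\ref{th:PH-framing} and then establish the chain \eqref{eq:cross-product}$\Leftrightarrow$\eqref{eq:orthogonality}$\Leftrightarrow$\eqref{eq:linear-dependence}. By Lemma~\ref{lem:framing-motion} the trajectory of the origin is $\vec r = -2\alpha^{-1}\vec b$, so $\vec r' = -\tfrac{2}{\alpha^{2}}(\alpha\vec b' - \alpha'\vec b)$. Theorem~\ref{th:PH-framing} tells us that $\vec r$ is PH precisely when $\mathcal C$ is framing in reduced form, which by Definition~\ref{def:framing} amounts to $\vec r'$ being parallel to $\mathcal A\qi\Cj{\mathcal A}$. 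The nonzero scalar prefactor is irrelevant for parallelism, so this is equivalent to $(\alpha\vec b' - \alpha'\vec b)\times \mathcal A\qi\Cj{\mathcal A} = 0$, which is \eqref{eq:cross-product}.

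For \eqref{eq:cross-product}$\Leftrightarrow$\eqref{eq:orthogonality}, I would invoke the observation that $\mathcal A\qi\Cj{\mathcal A}$, $\mathcal A\qj\Cj{\mathcal A}$, $\mathcal A\qk\Cj{\mathcal A}$ are the images of the orthonormal triple $\qi,\qj,\qk$ under the map $\vec v\mapsto \mathcal A\vec v\Cj{\mathcal A}$ that acts as the rotation associated with $\mathcal A$ scaled by $\mathcal A\Cj{\mathcal A}$. They therefore form a pairwise orthogonal triple of common norm $\mathcal A\Cj{\mathcal A}$, and a vectorial polynomial is parallel to $\mathcal A\qi\Cj{\mathcal A}$ exactly when it is orthogonal to both $\mathcal A\qj\Cj{\mathcal A}$ and $\mathcal A\qk\Cj{\mathcal A}$, i.e.\ when \eqref{eq:orthogonality} holds.

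The implication \eqref{eq:linear-dependence}$\Rightarrow$\eqref{eq:cross-product} is trivial. The substantive direction is \eqref{eq:cross-product}$\Rightarrow$\eqref{eq:linear-dependence}: parallelism yields a rational function $\mu$ with $\alpha\vec b' - \alpha'\vec b = \mu\mathcal A\qi\Cj{\mathcal A}$, and writing $\mu = p/q$ with $\gcd(p,q)=1$ and clearing denominators forces $q$ to divide every real-polynomial coordinate of $\mathcal A\qi\Cj{\mathcal A}$. Showing $\mu\in\R[t]$ thus reduces to the auxiliary claim that the three coordinates of $\mathcal A\qi\Cj{\mathcal A}$ are coprime in $\R[t]$ whenever $\mathcal A$ is reduced with respect to~$\qi$. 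To prove this I would decompose $\mathcal A = \alpha_1 + \alpha_2\qj$ with $\alpha_1,\alpha_2 \in \R[\qi][t]\cong\C[t]$, compute $\mathcal A\qi\Cj{\mathcal A} = (\alpha_1\Cj{\alpha_1}-\alpha_2\Cj{\alpha_2})\qi - 2\alpha_1\alpha_2\qk$, and suppose an irreducible $\pi\in\R[t]$ divides every coordinate, hence both $\alpha_1\Cj{\alpha_1}-\alpha_2\Cj{\alpha_2}$ and $\alpha_1\alpha_2$. A case distinction on whether $\pi$ is a linear factor $t-r$ ($r\in\R$) or an irreducible quadratic $(t-z)(t-\Cj z)$ with $z\in\C\setminus\R$, combined with the identity $\Cj{\alpha_i}(z) = \Cj{\alpha_i(\Cj z)}$, forces either a common root of $\alpha_1,\alpha_2$ at the real $r$ (producing a real right factor of $\mathcal A$) or the simultaneous vanishing $\alpha_1(z)=\alpha_2(\Cj z)=0$ (producing a right factor $t-z\in\R[\qi][t]$ of $\mathcal A$ via the rule $\qj\eta = \Cj\eta\,\qj$ for $\eta\in\R[\qi][t]$). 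Either outcome contradicts the reducedness of $\mathcal A$.

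I expect this coprimality argument to be the main obstacle. The linear-root case is immediate, but the quadratic case requires carefully tracking which of $\alpha_1(z)$, $\alpha_1(\Cj z)$, $\alpha_2(z)$, $\alpha_2(\Cj z)$ vanish and correctly recognizing the resulting $\R[\qi][t]$-right factor of $\mathcal A$ from that zero pattern. The remaining parts of the theorem are routine linear algebra in $\R^3$ together with the standard rotational interpretation of the quaternion sandwich product.
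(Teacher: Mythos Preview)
Your reduction via Theorem~\ref{th:PH-framing}, the computation $\vec r' = -2\alpha^{-2}(\alpha\vec b'-\alpha'\vec b)$, and the equivalence \eqref{eq:cross-product}$\Leftrightarrow$\eqref{eq:orthogonality} via the orthogonal frame $(\mathcal A\qi\Cj{\mathcal A},\mathcal A\qj\Cj{\mathcal A},\mathcal A\qk\Cj{\mathcal A})$ coincide with the paper's proof. The genuine difference lies in the key step \eqref{eq:cross-product}$\Rightarrow$\eqref{eq:linear-dependence}, namely the claim that $\mathcal A\qi\Cj{\mathcal A}$ has no non-constant real polynomial factor when $\mathcal A$ is reduced with respect to~$\qi$.

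The paper argues this via quaternion factorization theory: citing an auxiliary result (Proposition~2.1 of \cite{cheng16} / Lemma~1 of \cite{li16}), a real factor of $\mathcal A\qi\Cj{\mathcal A}$ would force a linear right factor $t-\mathcal H$ of $\mathcal A$ with $t-\Cj{\mathcal H}$ a left factor of $\qi\Cj{\mathcal A}$; the identity $\qi(t-\Cj{\mathcal H})=(t-(h_0-h_1\qi+h_2\qj+h_3\qk))\qi$ together with uniqueness of left factors with given norm then forces $h_2=h_3=0$, contradicting reducedness. Your route through the splitting $\H=\C\oplus\C\qj$, the explicit formula $\mathcal A\qi\Cj{\mathcal A}=(\alpha_1\Cj{\alpha_1}-\alpha_2\Cj{\alpha_2})\qi-2\alpha_1\alpha_2\qk$, and a root analysis over $\C$ is a valid and more self-contained alternative that avoids the external citations. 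One caution: in the quadratic case your subcase tree is slightly richer than you indicate. From $\alpha_1(z)\alpha_2(z)=0$ and $\alpha_1(z)\overline{\alpha_1(\bar z)}=\alpha_2(z)\overline{\alpha_2(\bar z)}$ (and the analogous relations at $\bar z$) one does not always land on $\alpha_1(z)=\alpha_2(\bar z)=0$; sometimes the outcome is $\alpha_1(\bar z)=\alpha_2(z)=0$, which yields the right factor $t-\bar z\in\R[\qi][t]$ instead of $t-z$. Either way you reach the desired contradiction, and your criterion ``$R\mid\alpha_1$ and $\Cj R\mid\alpha_2$'' for right factors $R\in\R[\qi][t]$ is exactly what is needed to recognize it. The paper's argument is shorter once the factorization lemma is granted; yours is elementary and stays entirely within the paper.
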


\begin{proof}
  Due to Theorem \ref{th:PH-framing} we only need to show that the three conditions are equivalent with $\mathcal C$ being framing. We have $\vec{r} = -2\alpha^{-1}\vec b$ implying $\vec{r}' = -2\alpha^{-2}(\alpha \vec b' - \alpha' \vec b)$. Therefore $\mathcal C$ is framing if and only if $\alpha \vec b' - \alpha' \vec b$ and $\mathcal A \qi \Cj{\mathcal A}$ are linearly dependent over the field of rational functions with real coefficients. Now, \eqref{eq:cross-product} expresses this linear dependency using the cross product. \eqref{eq:orthogonality} express the same property via orthogonality to the orthogonal complement $\Span(\mathcal A\qi \Cj{\mathcal A})^\perp= \Span(\mathcal A\qj \Cj{\mathcal A}, \mathcal A\qk \Cj{\mathcal A})$.

  In order to obtain \eqref{eq:linear-dependence} we express the linear dependence by existence of prime polynomials $\mu$, $\nu \in \R[t]$ such that
  \begin{equation*}
    \alpha \vec b' - \alpha' \vec b = \frac{\mu}{\nu} \mathcal A \qi \Cj{\mathcal A}.
  \end{equation*}
  Because the left-hand side of this equation is polynomial, $\nu$ must be a factor of $\mathcal A \qi \mathcal A^\star$. We will show that the assumption that $\mathcal A$ is reduced with respect to $\qi$ implies that $\mathcal A \qi \mathcal A^\star$ has no non-constant factors and therefore we have $\nu = 1$.

  Let us assume that $\psi$ is the unique monic real factor of maximal degree of $\mathcal A \qi \Cj{\mathcal A}$. By a simple but useful auxiliary result (Proposition~2.1 of \cite{cheng16} or Lemma~1 of \cite{li16}) and because $\mathcal A$ is assumed to have no real polynomial factor of positive degree, a positive degree of $\psi$ is only possible if there exists a linear quaternionic right factor $t-\mathcal{H}$ of $\mathcal A$ such that $t-\Cj{\mathcal{H}}$ is a left factor of $\qi \Cj{\mathcal A}$. In this case, $(t-\mathcal{H})(t-\Cj{\mathcal{H}})$ is a real quadratic factor of~$\psi$.

  The linear right factor $t-\mathcal{H}$ must be of a rather special shape. With $\mathcal{H} = h_0 + h_1\qi + h_2\qj + h_3\qk$ we find
  \begin{equation*}
    \qi(t - \Cj{\mathcal{H}}) = \qi(t - (h_0 - h_1\qi - h_2\qj - h_3\qk)) = (t - (h_0 - h_1\qi + h_2\qj + h_3\qk))\qi
  \end{equation*}
  because $t$, $h_0$, and $\qi$ commute with $\qi$ and $\qi\qj = -\qj\qi$,
  $\qi\qk = -\qk\qi$. This implies that $t - (h_0 - h_1\qi + h_2\qj + h_3\qk)$
  is a linear left factor of $\qi \Cj{\mathcal A}$. By the factorization theory of
  quaternionic polynomials \cite{hegedus13,li19}, linear left factors with a
  given norm polynomial are unique whence
  \begin{equation*}
    h_0 - h_1\qi - h_2\qj - h_3\qk = h_0 - h_1\qi + h_2\qj + h_3\qk
  \end{equation*}
  and consequently $h_2 = h_3 = 0$ -- a contradiction to the assumption that $\mathcal A$ is reduced with respect to~$\qi$.
\end{proof}
 
These equations and the relation between framing motions and PH curves stated in
Theorems~\ref{th:PH-framing}, \ref{th:system-of-equations} allow a computational
approach to rational PH curves. The basic idea is to prescribe $\alpha \in
\R[t]$ and $\mathcal A \in \H[t]$ and to compute the unknown coefficients of the
polynomial $\vec b \in \H[t]$ (subject to the constraint $\vec b + \Cj{\vec b} = 0$) from the
linear systems obtained by comparing coefficients of $t$ in
\eqref{eq:cross-product}, \eqref{eq:orthogonality} or
\eqref{eq:linear-dependence}. In most cases, the system obtained from
\eqref{eq:orthogonality} is to be preferred because it is the smallest, and the system obtained from \eqref{eq:linear-dependence} is to be avoided because it introduces auxiliary variables (the coefficients of $\mu$) that are not needed. However, for a more detailed analysis in Section~\ref{sec:existence} we will prefer system \eqref{eq:linear-dependence} which we found to provide the best general insight in solvability issues.

\section{Existence of Non-Polynomial Solutions}
\label{sec:existence}

Given $A \in \H[t]$, reduced with respect to $\qi$, and $\alpha \in \R[t]$ we
discuss existence of polynomial solutions $\vec b$ to one of the equivalent equation
systems of Theorem~\ref{th:system-of-equations}. Special emphasis is put on
solutions for $\vec b$ that lead to non-polynomial (truly rational) PH curves
$\vec{r} = -2\alpha^{-1}\vec b$. Our main results are summarized in
Theorem~\ref{th:non-polynomial-solutions} below.

It will make sense to consider also ``trivial'' solutions. They are characterized by constant $\vec{r}$ whence $\vec{r}' = 0$ which, of course, fulfills the PH condition. Trivial solutions have no direct relevance in applications. Nonetheless, the corresponding polynomials $\vec b \in \H[t]$ are elements of the solution space and can serve as elements of a basis, emphasizing the translation invariance of PH curves.

In case of the well-known polynomial PH curves, solving the systems of Theorem \ref{th:system-of-equations} can be circumvented by directly integrating $\vec{r}' = \lambda \mathcal A \qi \Cj{\mathcal A}$ with $\lambda \in \R[t]$, c.f. \eqref{racint}. For $\lambda = 0$ we recover trivial solutions (which are polynomial as well), for $\lambda \neq 0$ we will speak of ``non-trivial polynomial solutions''.

\begin{lemma}
  \label{lem:polynomial}
  A solution to the systems of Theorem~\ref{th:system-of-equations} gives rise
  to a polynomial PH curve if and only if $\alpha \in \R[t]$ is a factor of~$\vec b$.
  Trivial solutions are obtained precisely for $\vec b = \alpha \vec b_0$ where $\vec b_0 \in
  \H$ is constant and vectorial.
\end{lemma}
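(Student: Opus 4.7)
The plan is to reduce both claims to the explicit relation $\vec{r}=-2\alpha^{-1}\vec b$ established in Lemma~\ref{lem:framing-motion}. Since a solution $\vec b$ of the system of Theorem~\ref{th:system-of-equations} already yields a PH curve by Theorem~\ref{th:PH-framing}, the only remaining question for the first claim is whether the rational curve $\vec{r}$ happens to be polynomial. Writing $\vec b=b_1\qi+b_2\qj+b_3\qk$ with $b_1,b_2,b_3\in\R[t]$, the coordinates of $\vec{r}$ are $r_i=-2b_i/\alpha$, and these are polynomial precisely when $\alpha$ divides each $b_i$ in $\R[t]$. This is exactly what the statement $\alpha\mid\vec b$ means (equivalently, $\vec b=\alpha\vec b_1$ for some vectorial $\vec b_1\in\H[t]$), settling the first equivalence.

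For the characterization of trivial solutions I would start from the defining property $\vec{r}'\equiv 0$, i.e.\ $\vec{r}\equiv\vec{r}_0$ for some $\vec{r}_0=(r_{0,1},r_{0,2},r_{0,3})\in\R^3$. Via $\vec{r}=-2\vec b/\alpha$ this forces $\vec b=-\tfrac{\alpha}{2}(r_{0,1}\qi+r_{0,2}\qj+r_{0,3}\qk)$, which has exactly the form $\vec b=\alpha\vec b_0$ with $\vec b_0\in\H$ constant and vectorial. Conversely, any $\vec b$ of this shape satisfies the vectoriality constraint $\vec b+\Cj{\vec b}=0$ (because $\vec b_0$ does) and the one-line computation $\alpha\vec b'-\alpha'\vec b=\alpha\alpha'\vec b_0-\alpha'\alpha\vec b_0=0$ shows that \eqref{eq:linear-dependence} is satisfied with $\mu=0$; the resulting curve $\vec{r}=-2\vec b_0$ is constant, so the solution is trivial.

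No serious obstacle is expected. Both assertions are essentially direct translations between $\vec b$ and $\vec{r}$ through the formula of Lemma~\ref{lem:framing-motion}. The only points requiring a sliver of care are agreeing that "$\alpha$ is a factor of $\vec b$" is to be read component-wise in the vectorial polynomial module, and verifying explicitly that the proposed trivial $\vec b$ lies in the solution space, which reduces to the trivial derivative computation above.
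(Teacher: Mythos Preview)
Your proposal is correct and follows essentially the same approach as the paper: both arguments reduce the first equivalence to the relation $\vec{r}=-2\alpha^{-1}\vec b$, and your treatment of trivial solutions merely spells out what the paper dismisses as ``trivial.'' Your added verification that $\vec b=\alpha\vec b_0$ actually solves \eqref{eq:linear-dependence} (via $\alpha\vec b'-\alpha'\vec b=0$) is a nice explicit touch but does not constitute a different route.
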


\begin{proof}
  From $\vec b = \alpha \tilde{\vec b}$ with $\tilde{\vec b} \in \H[t]$ we obtain $\vec{r} = -2\alpha^{-1}\vec b = -2\tilde{\vec b}$ and the curve is indeed polynomial. Conversely, if $\vec{r} = -2\alpha^{-1}\vec b$ is polynomial, then clearly $\alpha$ is a factor of $\vec b$. The statement on the trivial solutions is trivial.
\end{proof}

Since polynomial PH curves are well-understood, we are mostly interested in
truly rational (non-polynomial) solutions. Moreover, it is not our immediate aim
to compute solutions of \eqref{eq:cross-product}, \eqref{eq:orthogonality}, or
\eqref{eq:linear-dependence} but to analyze existence and type of solutions.
Questions about dimension and basis of the solution space will not be formally
treated in this paper but some observations can be found in
Section~\ref{sec:examples}.

The system \eqref{eq:linear-dependence} is well-suited for studying solvability
and in particular the type of solutions (trivial, polynomial, or
non-polynomial). Rather surprisingly, it turns out, that the desired rational
solutions only occur in exceptional cases. We proceed by analysing in detail the
system of linear equations \eqref{eq:linear-dependence}. In particular we will
fully decide for which inputs $A$ and $\alpha$ there exist solutions $\vec b$ for
which the resulting PH curve is truly rational (non-polynomial).

Our main technical tool consists in fixing a monic linear polynomial $\beta \in \C[t]$ and considering \eqref{eq:linear-dependence} in the polynomial basis $(1,\beta,\beta^2,\beta^3,\ldots)$. As we will see, in the interesting cases $\beta$ will be a factor of $\alpha$. For this reason we consider the complex extension in order to handle the non-real roots of $\alpha$ as well. If $\beta \in \R[t]$ is real, all the coefficients introduced below are real as well. Otherwise, the coefficients are complex but a simple additional argument in the proof of Theorem~\ref{th:non-polynomial-solutions} will guarantee existence of real solutions.

Suppose, that $\beta$ is a factor of $\alpha$ of multiplicity $n\ge 0$. We
express the polynomials $\vec b$, $\mathcal A \qi \mathcal A^\ast$, $\mu$ and $\gamma \coloneqq \alpha
/ \beta^n$ in the basis $(1,\beta,\beta^2,\beta^3,\ldots)$:
\begin{equation*}
  \vec b = \sum_i \beta^i \vec b_i,\quad
  \mathcal A \qi \Cj{\mathcal A} = \sum_i \beta^i \vec f_i,\quad
  \mu = \sum_i \beta^i \mu_i,\quad
  \gamma = \sum_i \beta^i \gamma_i.
\end{equation*}
The vectorial coefficients $\vec f_i$ and $\gamma_i$ are determined by the input
data. Note that $\vec f_0 \neq 0$ because $\mathcal A$ is reduced with respect to $\qi$ and
$\gamma_0 \neq 0$ because $n$ is the multiplicity of $\beta$ as factor of
$\alpha$. The unknowns are $\vec \vec b_i$ which, by Lemma~\ref{lem:vectorial}, are
vectorial quaternions and the scalars $\mu_i$. We are looking for solution
polynomials, whence only finitely many of the coefficients $\vec \vec b_i$, $\mu_i$ can be
different from zero. However, at this point, we do not wish to bound their
respective degrees. Thus, we do not restrict the range of the summation variable
$i \in \mathbb{N}_0$. With regard to \eqref{eq:linear-dependence}, we compute
the expressions $\alpha \vec b' - \alpha' \vec b$ and $\mu \mathcal A \qi \mathcal A^\ast$:
\begin{equation*}
  \begin{aligned}
    \alpha \vec b' - \alpha' \vec b &= \beta^n \gamma \vec b' - \beta^n \gamma' \vec b - n \beta^{n-1} \gamma \vec b \\
    &= \beta^n (\sum_i \beta^i \gamma_i) (\sum_i i \beta^{i-1} \vec b_i) - \beta^n (\sum_i i \beta^{i-1} \gamma_i) (\sum_i \beta^i \vec b_i) - n\beta^{n-1} (\sum_i \beta^i \gamma_i) (\sum_i \beta^i \vec b_i) \\
    &= \sum_{i,j} \beta^{i+j+n-1} (j-i-n) \gamma_i \vec b_j \\
    \mu \mathcal A \qi \mathcal A^\ast &= (\sum_i \beta^i \mu_i) (\sum_i \beta^i \vec f_i) = \sum_{i,j} \beta^{i+j} \mu_i \vec f_j.
  \end{aligned}
\end{equation*}
Comparing coefficients in this basis, system \eqref{eq:linear-dependence} becomes
\begin{equation}
  \label{eq:linear-dependence-system}
  \sum_{i+j=k-n} (j-i-n) \gamma_i \vec b_j = \sum_{i=0}^{k-1} \mu_i \vec f_{k-i-1},\quad
  k \ge 1.
\end{equation}

This is a system of homogeneous linear equations with unknowns $\vec b_i$ and
$\mu_i$. In the following lemmas, we will exploit the particular (triangular and
symmetric) form of \eqref{eq:linear-dependence-system} to analyse the solution
space depending on $n$. Let us observe that in all cases $\vec f_0 \neq 0$ by the
assumption that $\mathcal A$ is reduced with respect to $\qi$ and $\gamma_0 \neq 0$ as
otherwise the multiplicity of $\beta$ as factor of $\alpha$ would be larger than
$n$. Our main purpose is to discuss existence or non-existence of solutions for
$\vec b$ having $\beta$ as a factor with certain multiplicity. These can be very
simply identified via the annulation of initial coefficients: By
Lemma~\ref{lem:polynomial}, solutions with $(\vec b_0,\vec b_1,\ldots,\vec b_{n-1}) \neq (0, 0,
\ldots, 0)$ give rise to non-polynomial PH curves.

\begin{lemma}
  \label{lem:factors-1}
  For given $\alpha \in \R[t]$ and $\mathcal A \in \H[t]$, reduced with respect to $\qi$,
  consider a monic linear factor $\beta \in \C[t]$ of $\alpha$ of multiplicity
  $n = 1$. There exists a polynomial solution $\mu$, $\vec b$ of
  \eqref{eq:linear-dependence} such that $\beta$ does not divide $\vec b$ if and only
  if $\{\vec f_0,\vec f_1\}$ are linearly dependent.
\end{lemma}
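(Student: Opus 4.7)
The plan is to read off the first two equations ($k=1$ and $k=2$) of \eqref{eq:linear-dependence-system}, noting that for every $k\ge 3$ the coefficient of $\vec b_{k-1}$ on the left hand side is $(k-2)\gamma_0\ne 0$, so those higher equations can always be solved recursively for $\vec b_{k-1}$ and only the first two impose genuine obstructions. With $n=1$ they read
\begin{equation*}
  -\gamma_0\vec b_0=\mu_0\vec f_0,\qquad
  -2\gamma_1\vec b_0=\mu_0\vec f_1+\mu_1\vec f_0,
\end{equation*}
and one observes that $\vec b_1$ does not appear in either of them because its coefficient $(1-0-1)\gamma_0$ in the $k=2$ equation vanishes identically.

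For the ``only if'' direction I would use the first equation together with $\gamma_0\ne 0$ and $\vec f_0\ne 0$ to conclude $\vec b_0\ne 0\iff\mu_0\ne 0$, in which case $\vec b_0=-\mu_0\vec f_0/\gamma_0$. Substituting into the second and rearranging gives $\mu_0\vec f_1=(2\mu_0\gamma_1/\gamma_0-\mu_1)\vec f_0$, and for $\mu_0\ne 0$ this forces $\vec f_1\in\Span(\vec f_0)$, i.e.\ linear dependence of $\{\vec f_0,\vec f_1\}$.

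For the ``if'' direction, given $\vec f_1=c\vec f_0$ for some scalar $c$, I would construct an explicit polynomial solution by working with the integrated form $(\vec b/\alpha)'=\mu\,\mathcal A\qi\Cj{\mathcal A}/\alpha^2$ of \eqref{eq:linear-dependence}. Setting $\mu\coloneqq\gamma^2(1-c\beta)$ simplifies the right hand side to $(1-c\beta)\mathcal A\qi\Cj{\mathcal A}/\beta^2$, whose only finite pole lies at the root $t_\beta$ of $\beta$. The hypothesis ensures $(1-c\beta)\mathcal A\qi\Cj{\mathcal A}\equiv\vec f_0\pmod{\beta^2}$, so the integrand equals $\vec f_0/\beta^2+\vec Q$ for some polynomial $\vec Q\in\H[t]$ and has vanishing residue at $t_\beta$. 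Integration then produces $-\vec f_0/\beta+\vec P$ with polynomial $\vec P\in\H[t]$, and multiplication by $\alpha=\beta\gamma$ gives the polynomial $\vec b=-\gamma\vec f_0+\alpha\vec P$ whose value $\vec b(t_\beta)=-\gamma_0\vec f_0\ne 0$ confirms $\beta\nmid\vec b$.

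The main step I see is the design of the multiplier $\mu$: the factor $\gamma^2=(\alpha/\beta)^2$ absorbs all poles of $1/\alpha^2$ away from $t_\beta$, thereby reducing the integrability obstruction to a single residue condition at $t_\beta$, and the linear correction $(1-c\beta)$ is exactly what the linear-dependence hypothesis provides to kill that remaining residue. Everything else is a straightforward verification that the resulting $\vec b$ is indeed polynomial, vectorial, and has the required nonzero constant coefficient in the $\beta$-basis.
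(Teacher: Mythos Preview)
Your argument is correct. The ``only if'' direction is essentially identical to the paper's: both extract $\vec b_0=-\mu_0\gamma_0^{-1}\vec f_0$ from the $k=1$ equation, substitute into the $k=2$ equation, and read off linear dependence when $\mu_0\neq 0$.

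The ``if'' direction, however, is genuinely different. The paper stays entirely within the linear system \eqref{eq:linear-dependence-system}: it assigns $\mu_0=1$, solves for $\mu_1$ from the dependence relation, and then invokes the triangular structure (each equation for $k\ge 3$ introduces $\vec b_{k-1}$ with nonzero coefficient $(k-2)\gamma_0$) to solve recursively, noting that finitely many nonzero $\mu_i$ force finitely many nonzero $\vec b_i$. Your construction instead passes to the differential form $(\vec b/\alpha)'=\mu\,\mathcal A\qi\Cj{\mathcal A}/\alpha^2$ and engineers $\mu=\gamma^2(1-c\beta)$ so that the integrand has a single pole of order two with zero residue, making the antiderivative rational with a simple pole at $t_\beta$; multiplying back by $\alpha$ then gives a polynomial $\vec b$ with $\vec b(t_\beta)=-\gamma_0\vec f_0\neq 0$.

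Your route is more explicit---it produces a closed-form $\mu$ and $\vec b$ in one stroke and makes the obstruction (a residue) transparent. The paper's recursive argument is less explicit but has the advantage that it generalises uniformly to Lemmas~\ref{lem:factors-2} and~\ref{lem:factors-n}: the same triangular-system reasoning handles arbitrary multiplicity $n$, whereas an integration-based approach for $n\ge 2$ would require controlling several residues simultaneously and the design of $\mu$ becomes less obvious.
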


\begin{proof}
  For $n = 1$ the system \eqref{eq:linear-dependence-system} boils down to
  \begin{equation}
    \label{eq:case1}
    \begin{aligned}
                                                - 1\gamma_0\vec b_0 &= \mu_0\vec f_0,\\
                                   0\gamma_0\vec b_1 - 2\gamma_1\vec b_0 &= \mu_0\vec f_1 + \mu_1\vec f_0,\\
                    1\gamma_0\vec b_2 - 1\gamma_1\vec b_1 - 3\gamma_2\vec b_0 &= \mu_0\vec f_2 + \mu_1\vec f_1 + \mu_2\vec f_0,\\
     2\gamma_0\vec b_3 + 0\gamma_1\vec b_2 - 2\gamma_2\vec b_1 - 4\gamma_3\vec b_0 &= \mu_0\vec f_3 + \mu_1\vec f_2 + \mu_2\vec f_1 + \mu_3\vec f_0,\\
     &\;\;\vdots
    \end{aligned}
  \end{equation}
  Contrary to common convention, we do not omit the coefficients $0$ and $1$ on the left-hand side in order to highlight the system's regular structure.

  If $\{\vec f_0,\vec f_1\}$ is linearly dependent, we construct a solution with $\vec b_0\neq 0$. The first vectorial equation is satisfied setting $\vec b_0 = -\mu_0\gamma_0^{-1}\vec f_0$. Plugging this into the second equation yields
  \begin{equation}
    \label{eq:mu-1}
    \mu_0\vec f_1 + (\mu_1 - 2\gamma_1\mu_0\gamma_0^{-1})\vec f_0 = 0.
  \end{equation}
  Since $\vec f_0$ and $\vec f_1$ are linearly dependent and $\vec f_0\neq 0$, there exists $\varphi \in \R$ so that $\vec f_1+\varphi \vec f_0 = 0$. We set $\mu_0 = 1$ and $\mu_1 = \varphi + 2\gamma_1\gamma_0^{-1}\varphi_1$ so that the first and second equation are satisfied.

  We still need to argue that there exists a solution for the remaining
  variables $\vec b_1$, $\vec b_2$, \ldots{} and $\mu_2$, $\mu_3$, \ldots{} with only
  finitely many of the vectorial quaternions $\vec b_i$ different from zero.

  The further equations can be satisfied, because $\vec b_2, \vec b_3, \ldots$ appear for the first time one by one and with the nonzero coefficient $\gamma_0$. We may therefore arbitrarily assign $\vec b_1$, $\mu_2$, $\mu_3$, \ldots, and solve the remaining vectorial equations successively for $\vec b_2$, $\vec b_3$, \ldots In particular, we are free to select only finitely many $\mu_i$ different from zero. Then there exists $M \in \mathbb{N}$ such that $\mu_m = \gamma_m = \vec f_m = 0$ for all $m \ge M$. This implies existence of a solution where only finitely many of the vectorial quaternions $\vec b_i$ are different from zero as well. By construction, in this solution $\vec b_0 = -\gamma_0^{-1}\vec f_0\neq 0$ and thus, $\beta$ does not divide~$\vec b$.

  If the set $\{\vec f_0,\vec f_1\}$ is linearly independent, \eqref{eq:mu-1} implies
  $\mu_0 = \mu_1 = 0$ whence $\vec b_0 = 0$ and $\beta$ divides all possible
  solutions for~$\vec b$. The argument for existence of polynomial solutions is the
  same as above.
\end{proof}

\begin{lemma}
  \label{lem:factors-2}
  For given $\alpha \in \R[t]$ and $\mathcal A \in \H[t]$, reduced with respect to $\qi$,
  consider a monic linear factor $\beta \in \C[t]$ of $\alpha$ of multiplicity
  $n = 2$. There exists a polynomial solution $\mu$, $\vec b$ of
  \eqref{eq:linear-dependence} such that $\beta^2$ does not divide $\vec b$ if and
  only if $\{\vec f_0,\vec f_1,\vec f_2\}$ are linearly dependent.
\end{lemma}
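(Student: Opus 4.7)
The plan is to specialize the system \eqref{eq:linear-dependence-system} to $n = 2$ and carry out the same kind of analysis as in Lemma~\ref{lem:factors-1}, the crucial new feature being that the critical block now contains \emph{four} equations instead of two. Explicitly, the first equations read
\begin{align*}
  k = 1:&\quad 0 = \mu_0 \vec f_0, \\
  k = 2:&\quad -2\gamma_0 \vec b_0 = \mu_0 \vec f_1 + \mu_1 \vec f_0, \\
  k = 3:&\quad -\gamma_0 \vec b_1 - 3\gamma_1 \vec b_0 = \mu_0 \vec f_2 + \mu_1 \vec f_1 + \mu_2 \vec f_0, \\
  k = 4:&\quad 0\cdot\gamma_0\vec b_2 - 2\gamma_1 \vec b_1 - 4\gamma_2 \vec b_0 = \mu_0 \vec f_3 + \mu_1 \vec f_2 + \mu_2 \vec f_1 + \mu_3 \vec f_0,
\end{align*}
while for $k \ge 5$ the new variable $\vec b_{k-2}$ appears with the nonzero coefficient $(k-4)\gamma_0$. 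Equation $k=1$ forces $\mu_0 = 0$, and the decisive phenomenon is the vanishing of the coefficient of $\vec b_2$ at $k = 2n = 4$: the first four equations thus close up into a self-contained system for $\vec b_0, \vec b_1$ and $\mu_1, \mu_2, \mu_3$ alone, and this is where the obstruction lives.

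For the forward implication I would eliminate $\vec b_0$ and $\vec b_1$ from this block in turn. Equation $k=2$ gives $\vec b_0 = -\mu_1\vec f_0/(2\gamma_0)$; substituting into $k=3$ expresses $\vec b_1$ as an explicit combination of $\vec f_0, \vec f_1$; and substituting both into $k=4$ produces, after a short calculation, an identity of the form
\begin{equation*}
  \mu_1\vec f_2 \;=\; \Bigl(\tfrac{2\gamma_1\mu_1}{\gamma_0} - \mu_2\Bigr)\vec f_1 + \Bigl(\tfrac{2\gamma_2\mu_1}{\gamma_0} + \tfrac{2\gamma_1\mu_2}{\gamma_0} - \tfrac{3\gamma_1^2\mu_1}{\gamma_0^2} - \mu_3\Bigr)\vec f_0.
\end{equation*}
If $\mu_1 \neq 0$, this directly yields $\vec f_2 \in \Span\{\vec f_0, \vec f_1\}$. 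If $\mu_1 = 0$, then $\vec b_0 = 0$ and $\vec b_1 = -\mu_2\vec f_0/\gamma_0$; the assumption $\beta^2 \nmid \vec b$ then forces $\vec b_1 \neq 0$ and hence $\mu_2 \neq 0$, and the $k=4$ identity collapses to $\mu_2\vec f_1 = \bigl(\tfrac{2\gamma_1\mu_2}{\gamma_0} - \mu_3\bigr)\vec f_0$, which gives $\vec f_1 \in \Span\{\vec f_0\}$. In every subcase $\{\vec f_0, \vec f_1, \vec f_2\}$ is linearly dependent.

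For the converse, given a relation $\vec f_2 = c_0\vec f_0 + c_1\vec f_1$, I would set $\mu_1 = 1$ and read off $\mu_2, \mu_3$ by matching the coefficients of $\vec f_1, \vec f_0$ in the identity above; the $\vec b_0, \vec b_1$ obtained from the $k=2,3$ equations then satisfy $\vec b_0 = -\vec f_0/(2\gamma_0) \neq 0$, so $\beta^2 \nmid \vec b$. The extension to a full polynomial solution proceeds precisely as in the proof of Lemma~\ref{lem:factors-1}: $\vec b_2$ may be chosen freely (say $\vec b_2 = 0$) and $\mu_i$ set to zero for $i \ge 4$, after which each $\vec b_{k-2}$ with $k \ge 5$ is uniquely determined by the nonzero coefficient $(k-4)\gamma_0$ and only finitely many $\vec b_i$ turn out to be nonzero. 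The main technical obstacle is the case distinction in the forward direction: the degenerate subcase $\mu_1 = 0$ must be handled separately, and what ultimately makes the whole argument go through is the clean decoupling of the critical block from the tail equations, guaranteed by the vanishing of the coefficient of $\vec b_2$ at $k = 2n = 4$.
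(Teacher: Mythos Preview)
Your approach is essentially the same as the paper's: specialize \eqref{eq:linear-dependence-system} to $n=2$, use $\mu_0=0$ from $k=1$, eliminate $\vec b_0,\vec b_1$ via equations $k=2,3$, and read off the critical relation from $k=4$. The paper writes this relation in the abstract form
\[
  \mu_1\vec f_2 + (\mu_2+\varrho_{11}\mu_1)\vec f_1 + (\mu_3+\varrho_{20}\mu_2+\varrho_{10}\mu_1)\vec f_0 = 0
\]
and argues the contrapositive (linear independence $\Rightarrow \mu_1=\mu_2=\mu_3=0 \Rightarrow \vec b_0=\vec b_1=0$), which avoids your case split on~$\mu_1$ but is otherwise the same computation.

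There is, however, a small gap in your converse. You begin with ``given a relation $\vec f_2 = c_0\vec f_0 + c_1\vec f_1$'', but linear dependence of $\{\vec f_0,\vec f_1,\vec f_2\}$ does \emph{not} force $\vec f_2 \in \Span\{\vec f_0,\vec f_1\}$: it may happen that $\vec f_0,\vec f_1$ are already dependent while $\vec f_2$ lies outside their (one-dimensional) span. In that subcase your choice $\mu_1=1$ cannot satisfy the $k=4$ identity. The paper handles this uniformly by starting from an arbitrary nontrivial triple $(\varphi_0,\varphi_1,\varphi_2)$ with $\varphi_0\vec f_0+\varphi_1\vec f_1+\varphi_2\vec f_2=0$, setting $\mu_1=\varphi_2$, $\mu_2=\varphi_1-\varrho_{11}\mu_1$, $\mu_3=\varphi_0-\varrho_{20}\mu_2-\varrho_{10}\mu_1$, and then arguing that $(\vec b_0,\vec b_1)\neq(0,0)$ because $(\mu_1,\mu_2,\mu_3)\neq(0,0,0)$. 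Alternatively you can patch your argument by treating the subcase $\vec f_1\in\Span\{\vec f_0\}$ separately with $\mu_1=0$, $\mu_2=1$, exactly mirroring the case split you already made in the forward direction.
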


\begin{proof}
  For $n = 2$ the system \eqref{eq:linear-dependence-system} reads as
  \begin{equation}
    \label{eq:case2}
    \begin{aligned}
      0 &= \mu_0 \vec f_0, \\
      -\;2 \gamma_0 \vec b_0 &= \mu_0 \vec f_1 + \mu_1 \vec f_0, \\
      -\;1\gamma_0 \vec b_1 - 3 \gamma_1 \vec b_0 &= \mu_0 \vec f_2 + \mu_1 \vec f_1 + \mu_2 \vec f_0, \\
      0\gamma_0\vec b_2 - 2 \gamma_1 \vec b_1 - 4 \gamma_2 \vec b_0 &= \mu_0 \vec f_3 + \mu_1 \vec f_2 + \mu_2 \vec f_1 + \mu_3 \vec f_0, \\
      1\gamma_0 \vec b_3 - 1\gamma_1\vec b_2 - 3\gamma_2\vec b_1 - 5\gamma_3\vec b_0 &= \mu_0 \vec f_4 + \mu_1 \vec f_3 + \mu_2 \vec f_2 + \mu_3 \vec f_1 + \mu_4 \vec f_0, \\
      &\;\;\vdots
    \end{aligned}
  \end{equation}
  Since $\vec f_0 \neq 0$, the first equation implies $\mu_0 = 0$ canceling the first terms of all right-hand sides. Solving the second and the third equation for $\vec b_0$ and $\vec b_1$ (recall that $\gamma_0 \neq 0$) and plugging the result into the fourth equation yields a relation
  \begin{equation}
    \label{eq:mu-2}
    \mu_1 \vec f_2 + (\mu_2 + \varrho_{11}\mu_1) \vec f_1 +(\mu_3 + \varrho_{20}\mu_2 + \varrho_{10}\mu_1)\vec f_0 = 0
  \end{equation}
  with some coefficients $\varrho_{ij}$ which could be expressed explicitly but there is no need for their particular form.

  If $\{\vec f_0,\vec f_1,\vec f_2\}$ is linearly independent we infer $\mu_1 = \mu_2 = \mu_3 = 0$ whence $\vec b_0 = \vec b_1 = 0$ and $\beta^2$ necessarily divides all possible solutions $\vec b$.

  If, on the other hand, $\{\vec f_0,\vec f_1,\vec f_2\}$ is linearly dependent, there exists a
  triple $(\varphi_0,\varphi_1,\varphi_2) \neq (0,0,0)$ of complex numbers such
  that $\varphi_0\vec f_0 + \varphi_1\vec f_1 + \varphi_2\vec f_2 = 0$. With $\mu_1 =
  \varphi_2$, $\mu_2 = \varphi_1 - \varrho_{11}\mu_1$, and $\mu_3 = \varphi_0 -
  \varrho_{20}\mu_2 - \varrho_{10}\mu_1$, the fourth equation is fulfilled and
  $(\mu_1,\mu_2,\mu_3) \neq (0,0,0)$. The corresponding solutions for $\vec b_0$,
  $\vec b_1$ can not be simultaneously zero because in such case the left-hand side
  of the first four equations would be zero, forcing $\mu_0=\mu_1=\mu_2=\mu_3=0$.
  Solutions for $\vec b_2$, $\vec b_3$, \ldots\ are constructed in the same way as in the
  proof of Lemma \ref{lem:factors-1}. More precisely, we may arbitrarily assign $\vec b_2$, $\mu_4$, $\mu_5$, \ldots, and solve the remaining vectorial equations successively for $\vec b_3$, $\vec b_4$, \ldots\ If only finitely many of $\mu_4$, $\mu_5$, \ldots\ are different from zero, a solution with only finitely many $\vec b_2$, $\vec b_3$, \ldots\ different from zero exists. In this solution, $\beta^2$ does not divide~$\vec b$.
\end{proof}

\begin{lemma}
  \label{lem:factors-n}
  For given $\alpha \in \R[t]$ and $\mathcal A \in \H[t]$, reduced with respect to $\qi$,
  consider a monic linear factor $\beta \in \C[t]$ of $\alpha$ of multiplicity
  $n \ge 3$. Then there always exists a polynomial solution $\mu$, $\vec b$ of \eqref{eq:linear-dependence} such that $\beta^n$ does not divide $\vec b$.
\end{lemma}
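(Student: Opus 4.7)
The plan is to follow the scheme of Lemmas~\ref{lem:factors-1} and \ref{lem:factors-2}: exploit the triangular shape of \eqref{eq:linear-dependence-system} to reduce the problem to the single critical equation at $k = 2n$, where the coefficient of $\vec b_n$ on the left-hand side vanishes, and then examine when this constraint admits a suitable nontrivial solution. For $n \ge 3$ a dimension count will make existence automatic rather than conditional on $\mathcal A$.

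First, the equations $k = 1, 2, \ldots, n-1$ have vanishing left-hand side and, using $\vec f_0 \neq 0$, inductively force $\mu_0 = \mu_1 = \cdots = \mu_{n-2} = 0$. For $n \le k \le 2n-1$ the coefficient $(k-2n)\gamma_0$ of the highest unknown $\vec b_{k-n}$ is nonzero, so these equations uniquely determine $\vec b_0, \vec b_1, \ldots, \vec b_{n-1}$ as linear expressions in $\mu_{n-1}, \mu_n, \ldots, \mu_{2n-2}$. A direct induction shows $\vec b_i \in \Span(\vec f_0, \ldots, \vec f_i)$ and, crucially, that $(\vec b_0, \ldots, \vec b_{n-1}) = 0$ if and only if $(\mu_{n-1}, \ldots, \mu_{2n-2}) = 0$; indeed, the leading relation $\vec b_0 = -\mu_{n-1}\vec f_0/(n\gamma_0)$ sets the pattern and back-substitution propagates it.

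Substituting these expressions into the equation at $k = 2n$ and reorganizing by $\vec f_j$, the constraint takes the form
\[
  \mu_{n-1}\vec f_n + (\mu_n + L_n)\vec f_{n-1} + \cdots + (\mu_{2n-1} + L_{2n-1})\vec f_0 = 0,
\]
where each $L_i$ is a linear combination of $\mu_{n-1}, \ldots, \mu_{i-1}$. The invertible triangular substitution $\mu_i' := \mu_i + L_i$ rewrites this as $\mu_{n-1}\vec f_n + \mu_n'\vec f_{n-1} + \cdots + \mu_{2n-1}'\vec f_0 = 0$ and preserves the vanishing pattern of the first $n$ coordinates. Now the key dimension count: for $n \ge 3$ the $n+1 \ge 4$ vectors $\vec f_0, \vec f_1, \ldots, \vec f_n$ lie in the three-dimensional space of vectorial quaternions and therefore admit a nontrivial linear relation. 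Any such relation must have nonzero coefficient on some $\vec f_j$ with $j \ge 1$, as otherwise it would reduce to $\lambda\vec f_0 = 0$ with $\lambda \neq 0$, contradicting $\vec f_0 \neq 0$. This produces a choice with at least one of $\mu_{n-1}, \mu_n', \ldots, \mu_{2n-2}'$ nonzero, equivalently at least one of $\mu_{n-1}, \ldots, \mu_{2n-2}$ nonzero, and hence at least one $\vec b_i$ with $i \le n-1$ nonzero, so $\beta^n \nmid \vec b$.

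For $k > 2n$ the leading coefficient $(k-2n)\gamma_0$ is again nonzero, so one determines $\vec b_n, \vec b_{n+1}, \ldots$ successively after freely choosing $\mu_{2n}, \mu_{2n+1}, \ldots$; letting these vanish from some point onward yields a polynomial solution exactly as in the previous lemmas. The conceptual heart of the argument is that the condition ``$\{\vec f_0, \ldots, \vec f_n\}$ linearly dependent'', which was a genuine restriction on $\mathcal A$ when $n \le 2$, becomes forced by ambient dimension as soon as $n \ge 3$. I expect the main technical obstacle to be the careful bookkeeping needed to confirm that the triangular substitution and the back-substitution for $\vec b_0, \ldots, \vec b_{n-1}$ together ensure that nontriviality in the constraint genuinely lifts to some $\vec b_i$ with $i \le n-1$, rather than being absorbed entirely into $\mu_{2n-1}$.
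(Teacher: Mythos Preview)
Your argument is correct and mirrors the paper's: kill $\mu_0,\ldots,\mu_{n-2}$ from the first $n-1$ equations, express $\vec b_0,\ldots,\vec b_{n-1}$ from the next $n$, reduce equation $k=2n$ to a single homogeneous vector constraint in the $n+1$ scalars $\mu_{n-1},\ldots,\mu_{2n-1}$, and invoke $n+1>3$; your extra bookkeeping on the triangular form of that constraint is correct but not strictly needed, since the paper argues directly that a nontrivial $(\mu_{n-1},\ldots,\mu_{2n-1})$ forces some $\vec b_i\neq 0$ via equations $n$ through $2n$. One small slip: $\vec b_n$ never appears with nonzero leading coefficient (its coefficient at $k=2n$ is $0$), so it must be chosen freely---e.g.\ set to zero---rather than ``determined'' from the equations with $k>2n$, exactly as the paper does.
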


\begin{proof}
  Let us describe the system \eqref{eq:linear-dependence-system} for a general $n\ge 3$. The right-hand sides of all equations have a very regular triangular structure. In the $i$-th equation one new variable $\mu_{i+1}$ is introduced with the non-zero coefficient~$\vec f_0$.

  The left hand sides are slightly more complicated. The left-hand side of the first $n-1$ equations equals $0$. In the $n$ subsequent equations $n$ variables $\vec b_0$, $\vec b_1$, \ldots, $\vec b_{n-1}$ are introduced one by one with non-zero coefficients which is an integer multiple of $\gamma_0$. The next equation does not introduce any new variable on the left-hand side. All the remaining equations have a very regular left-hand side introducing always one new variable $\vec b_{n+i}$ with non-zero coefficient in the $(2n+i)$-th equation.

  Based on this general structure we can always construct a non-polynomial
  solution curve in the following way.

  \begin{enumerate}
  \item Satisfy the equations number $1, \ldots, (n-1)$ by setting $\mu_0 = \mu_1 = \cdots = \mu_{n-2} = 0$.
  \item Satisfy the equations number $n, \ldots, (2n-1)$ by expressing the unknowns $\vec b_0$, $\vec b_1$, \ldots, $\vec b_{n-1}$ uniquely in terms of the input coefficients ($\vec f_i$, $\gamma_i$) and the unknowns $\mu_{n-1}$, $\mu_n$, \ldots, $\mu_{2n-2}$ (which remain free).
  \item \label{step:3} Substitute the previous expressions to the equation number $2n$ to obtain one homogeneous vector equation in the $(n+1)$ scalar unknowns $\mu_{n-1}$, $\mu_n$, \ldots, $\mu_{2n-1}$. Set these unknowns to a non-trivial solution which must exist because of $n \ge 3$.
  \item Set $\vec b_n$ and $\mu_{2n}$, $\mu_{2n+1}$, \ldots arbitrarily and solve the remaining equations uniquely for $\vec b_{n+1}$, $\vec b_{n+2}$, \ldots If only finitely many of $\mu_{2n}$, $\mu_{2n+1}$, \ldots are different from zero, a solution with only finitely many of $\vec b_{n+1}$, $\vec b_{n+2}$, \ldots different from zero does exist.
  \end{enumerate}
  We claim that in this solution $\vec b$ does not have $\beta^n$ as a factor. Indeed, if all $\vec b_0$, $\vec b_1$, \ldots, $\vec b_{n-1}$ are simultaneously zero then $\mu_{n-1} = \mu_n = \cdots = \mu_{2n-1} = 0$ -- via equations $n, \ldots, 2n$ -- contradicting the non-triviality of the solution in Step~\ref{step:3}.
\end{proof}

Before combining Lemmas~\ref{lem:factors-1}--\ref{lem:factors-n} into the central result of this article, we provide a simple observation that allows us to infer existence of real solutions even if these lemmas allow for $\beta \in \C[t]$.

\begin{lemma}
  \label{lem:rational}
  If $b$ is a rational function over $\C$ but not polynomial, then either its
  real part $\frac{1}{2}(b+\overline{b})$ or its imaginary part
  $\frac{1}{2}(b-\overline{b})$ is non-polynomial as well. (The bar denotes
  complex conjugation.)
\end{lemma}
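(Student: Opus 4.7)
The plan is to prove the contrapositive: assuming both the symmetric part $p \coloneqq \tfrac{1}{2}(b+\overline{b})$ and the antisymmetric part $q \coloneqq \tfrac{1}{2}(b-\overline{b})$ are polynomials, I will deduce that $b$ itself must be polynomial, contradicting the hypothesis. The whole argument rests on the tautological decomposition $b = p + q$ together with the fact that $\mathbb{C}[t]$ is closed under addition.

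First I would make precise what $\overline{b}$ means as a rational function: writing $b = P/Q$ with $P, Q \in \mathbb{C}[t]$, set $\overline{b} \coloneqq \overline{P}/\overline{Q}$ where the bars on polynomials denote coefficient-wise complex conjugation (the indeterminate $t$ is treated as real). With this convention, $b + \overline{b}$ and $b - \overline{b}$ are well-defined elements of $\mathbb{C}(t)$, and on real arguments $t \in \R$ they agree with the usual pointwise operations of complex conjugation. In particular the identity $b = p + q$ is immediate.

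Now, under the contrapositive hypothesis $p, q \in \mathbb{C}[t]$, the sum $p + q = b$ lies in $\mathbb{C}[t]$ as well, which is the desired contradiction.

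I do not expect any real obstacle here: the only conceptual point that warrants attention is the distinction between conjugating the values of $b$ and conjugating its coefficients, but once $\overline{b}$ is defined as above, the proof reduces to the one-line identity $b = \tfrac{1}{2}(b+\overline{b}) + \tfrac{1}{2}(b-\overline{b})$ combined with additive closure of the polynomial ring.
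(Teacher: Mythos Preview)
Your proof is correct and is essentially identical to the paper's own argument, which consists of the single sentence ``If both, real and imaginary part, were polynomial, then so is~$b$.'' You have merely added a careful clarification of what $\overline{b}$ means for a rational function, which the paper leaves implicit.
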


\begin{proof}
  If both, real and imaginary part, were polynomial, then so is~$b$.
\end{proof}

\begin{theorem}
  \label{th:non-polynomial-solutions}
  For given $\alpha \in \R[t]$ and $\mathcal A \in \H[t]$, reduced with respect to $\qi$ the following two conditions are equivalent:
  \begin{enumerate}
  \item There exist a polynomial $ \vec b\in \H[t]$ so that $\mathcal C = (\alpha + \eps \vec b)\mathcal A$ is a motion polynomial and the trajectory of the origin is a rational non-polynomial PH curve.
  \item There exists a (real or complex) root $z$ of $\alpha$ of multiplicity $n$ such that the set $\{\vec f_0,\vec f_1,\ldots,\vec f_n\}$ with coefficients $\vec f_i$ defined by $\mathcal A \qi \mathcal A^\ast = \sum_i (t-z)^i \vec f_i$ is linearly dependent. Note that the linear dependence condition is always satisfied if $n \ge 3$.
  \end{enumerate}
\end{theorem}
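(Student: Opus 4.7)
The plan is to combine Lemma~\ref{lem:polynomial} with the case analysis in Lemmas~\ref{lem:factors-1}, \ref{lem:factors-2}, \ref{lem:factors-n}, and then pass from complex to real solutions via Lemma~\ref{lem:rational}. The basic observation is that the theorem is essentially a global reformulation of the three ``factor'' lemmas applied simultaneously at every linear factor of $\alpha$, so most of the work is already done; what remains is stitching and a reality argument.

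First, by Lemma~\ref{lem:polynomial}, the trajectory $\vec{r}=-2\alpha^{-1}\vec b$ is non-polynomial precisely when $\alpha$ fails to divide $\vec b$ in $\H[t]$. Factoring $\alpha$ over $\C$, this is equivalent to the existence of some monic linear $\beta = t-z \in \C[t]$ occurring in $\alpha$ with some multiplicity $n$ such that $\beta^n$ does not divide $\vec b$ (viewed in $(\H\otimes\C)[t]$). Conversely, condition~(2) of the theorem singles out exactly such a candidate $z$ via the linear-dependence property of $\{\vec f_0,\ldots,\vec f_n\}$, so the goal is to prove that $\beta^n \nmid \vec b$ is achievable for some polynomial solution $\vec b$ of \eqref{eq:linear-dependence} if and only if $\{\vec f_0,\ldots,\vec f_n\}$ is linearly dependent.

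Second, for each such candidate $z$ we would expand $\vec b$, $\mathcal A\qi\Cj{\mathcal A}$, $\mu$, and $\gamma=\alpha/\beta^n$ in the basis $(1,\beta,\beta^2,\ldots)$, so that \eqref{eq:linear-dependence} becomes exactly the system \eqref{eq:linear-dependence-system}. Then Lemmas~\ref{lem:factors-1}, \ref{lem:factors-2}, \ref{lem:factors-n} apply verbatim: for $n=1,2$ a (polynomial) solution with $\beta^n \nmid \vec b$ exists if and only if $\{\vec f_0,\ldots,\vec f_n\}$ is linearly dependent, and for $n\ge 3$ such a solution always exists. The automatic linear dependence for $n\ge 3$ claimed in the theorem is merely the remark that any $n+1 \ge 4$ vectors in the three-dimensional space of vectorial quaternions (over any field) are linearly dependent, which matches the statement of Lemma~\ref{lem:factors-n}. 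This establishes the equivalence locally at each root $z$, and hence globally via the observation in the first paragraph.

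Third, we descend from $\C$ to $\R$. When $z\in\R$, the coefficients $\vec f_i$, $\gamma_i$ are real and the constructions inside the lemmas produce a real-coefficient $\vec b$ directly. When $z\in\C\setminus\R$, its complex conjugate $\bar z$ is also a root of $\alpha$ of the same multiplicity $n$, and the linear-dependence condition at $\bar z$ is the complex conjugate of the one at $z$, hence holds simultaneously. The solution $\vec b$ produced by the lemmas has coefficients in $\H\otimes\C$, but the system \eqref{eq:linear-dependence} has real coefficients, so the coefficient-wise complex conjugate $\bar{\vec b}$ is also a solution. Consequently both $\tfrac12(\vec b+\bar{\vec b})$ and $\tfrac1{2\ci}(\vec b-\bar{\vec b})$ are real vectorial quaternion polynomials satisfying \eqref{eq:linear-dependence}, and Lemma~\ref{lem:rational}, applied to each scalar component of $\vec{r}=-2\alpha^{-1}\vec b$, guarantees that at least one of them gives a non-polynomial PH curve.

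The main obstacle I expect is the last step: one must take care not to conflate quaternionic conjugation $\Cj{(\cdot)}$ with the complex conjugation of coefficients, and verify that the real and imaginary parts in the coefficient field still yield vectorial quaternion polynomials solving the original real system. This is routine once the notation is separated, because coefficient-wise complex conjugation commutes with quaternionic conjugation and preserves vectoriality, so both extracted solutions belong to $\H[t]$ as required, and Lemma~\ref{lem:rational} closes the argument.
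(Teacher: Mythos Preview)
Your proposal is correct and follows essentially the same route as the paper: reduce (1) via Lemma~\ref{lem:polynomial} to the existence of a linear factor $\beta=t-z$ of $\alpha$ with $\beta^n\nmid\vec b$, invoke Lemmas~\ref{lem:factors-1}--\ref{lem:factors-n} for the local equivalence with the linear-dependence condition, and handle non-real $z$ by conjugating and appealing to Lemma~\ref{lem:rational}. Your write-up is in fact more explicit than the paper's (you spell out the global-to-local stitching and correctly normalize the imaginary part by $\tfrac{1}{2\ci}$), but the underlying argument is the same.
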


\begin{proof}
  For a real zero $z$ the theorem's statement follows from Theorem
  \ref{th:system-of-equations} and
  Lemmas~\ref{lem:factors-1}--\ref{lem:factors-n}. In case of $z \in \C
  \setminus \R$, the complex conjugate $\overline{z}$ is also a zero of
  multiplicity $n$ of $\alpha$ and the solutions $\vec b$ and $\overline{\vec b}$ of
  \eqref{eq:linear-dependence} to $z$ and $\overline{z}$, respectively, are
  interchanged by complex conjugation. Their linear combinations $\vec b +
  \overline{\vec b}$ and $\vec b - \overline{\vec b}$ provide solutions for
  \eqref{eq:cross-product} or \eqref{eq:orthogonality} as well. They are real
  and, by Lemma~\ref{lem:rational}, at least one of them leads to a
  non-polynomial PH curve.
\end{proof}

\begin{remark}
  \label{rem:polynomial-solutions}
  Theorem~\ref{th:non-polynomial-solutions} talks only about existence of
  non-polynomial solution curves but does not exclude polynomial solutions. In fact, trivial solutions always exist and non-trivial polynomial solutions exist if the degree of $\vec b$ is high enough. Lemma~\ref{lem:polynomial} ensures polynomiality if $\alpha$ is a factor of $\vec b$. This condition can be encoded by linear homogeneous constraints on the coefficients of $\vec b$ that can be added, for example, to the system \eqref{eq:orthogonality}. This is an alternative to the usual methods of computing polynomial PH curves.
\end{remark}

Theorem~\ref{th:non-polynomial-solutions} provides necessary and sufficient
conditions for the existence of non-polynomial solutions to data $\alpha$,
$\mathcal A$ but does not guarantee that a particular set of conditions can
actually be met by suitable $\alpha$ and $\mathcal A$. This is not logically
needed for the correctness of Theorem~\ref{th:non-polynomial-solutions} but,
nonetheless, should be clarified. For the construction of non-polynomial
solutions it is also of interest to understand the geometry of the exceptional
cases of Lemma~\ref{lem:factors-1} and Lemma~\ref{lem:factors-2}.

The vector valued polynomial function $\vec F \coloneqq \mathcal A \qi \Cj{\mathcal A}$ can be
interpreted as parametric equation of a curve in the projective plane
$\mathbb{P}^2(\R)$. We may consider it as the projection of the PH curve's
tangent indicatrix into the plane at infinity from the coordinate origin. With
this understanding, conditions on linear dependence in
Lemmas~\ref{lem:factors-1} and \ref{lem:factors-2}, respectively, allow a
geometric interpretation. Denote by $z$ the unique (real or complex) zero of the
linear factor $\beta$ of $\alpha$. Then the vectors $\vec F(z)$, $\vec F'(z)$, and $\vec F''(z)$ are non-zero multiples of $\vec f_0$, $\vec f_1$, and $\vec f_2$, respectively. In other words, linear dependence of $\{\vec f_0,\vec f_1\}$ is equivalent to linear dependence of $\{\vec F(z), \vec F'(z)\}$ and linear dependence of $\{\vec f_0,\vec f_1,\vec f_2\}$ is equivalent to linear dependence of $\{\vec F(z),\vec F'(z),\vec F''(z)\}$.

For real $z$, above linear dependencies generically have the following geometric
meaning:
\begin{itemize}
\item The set $\{\vec F(z),\vec F'(z)\}$ is linearly dependent if the point $\vec F(z)$ is a
  cusp of the rational parametric curve $\vec F$, c.f. \cite[p.~2]{bol}.
\item The set $\{\vec F(z),\vec F'(z),\vec F''(z)\}$ is linearly dependent if the point $\vec F(z)$
  is an inflection point of the rational parametric curve $\vec F$, c.f. \cite[p.~4]{bol}.
\end{itemize}

The construction of examples with inflection points is straightforward. Starting
with a polynomial $\mathcal A \in \H[t]$, we define as usual $\vec F \coloneqq \mathcal A \qi \Cj{\mathcal A}$
and compute the polynomial $D \coloneqq \det(\vec F,\vec F',\vec F'')$. The necessary and
sufficient condition for linear dependence of $\{\vec f_0,\vec f_1,\vec f_2\}$ is that $\beta$
is a linear factor of $D$. It can easily be fulfilled (c.f.
Example~\ref{ex:n2}).

In order to demonstrate feasibility of the cusp condition, we write $\mathcal A = \sum_i \mathcal A_i t^i$ with yet undetermined quaternion coefficients $\mathcal A_i$ and compute $\vec F \coloneqq \mathcal A \qi \Cj{\mathcal A} = \sum_i \vec f_i t^i$, where $\vec f_0 = \mathcal A_0 \qi \Cj{\mathcal A_0}$ and $\vec f_1 = \mathcal A_1\qi\Cj{\mathcal A_0}+\mathcal A_0\qi\Cj{\mathcal A_1}$. Solutions to the system of algebraic equations arising from
\begin{equation}
  \label{eq:lindep}
  (\mathcal A_0\qi\Cj{\mathcal A_0}) \times (\mathcal A_1\qi\Cj{\mathcal A_0} + \mathcal A_0\qi\Cj{\mathcal A_1}) = 0
\end{equation}
will result in a cusp at $t = 0$ (which is as good as any other real parameter
value for our purpose). It is no loss of generality to assume $\mathcal A_0 = 1$ whence
the cusp condition \eqref{eq:lindep} implies that $\mathcal A_1$ can be expressed as $\mathcal A_1
= a_{10} + a_{11}\qi$ with $a_{10}$, $a_{11} \in \R$. Determining examples with
cusps at complex parameter values is more involved. One particular case, taken
from \cite{krajnc1}, is provided in Example~\ref{ex:krajnc1} below.

\section{Examples and Observations}
\label{sec:examples}

We continue this article by presenting examples, some of them taken from
existing literature. The discussion will not only show how the criteria of
Theorem~\ref{th:non-polynomial-solutions} are met in known examples but also suggest interesting properties of the solution space.

\begin{example}
  \label{ex:simple}
We consider the polynomials $\mathcal A = t^2-(\qi+\qj)t+\qk$ and $\alpha = t^3$ whence necessarily $\beta = t$ and $n = 3$. We are in the case of Lemma~\ref{lem:factors-n} so that non-polynomial solutions do exist. Solving the the system \eqref{eq:orthogonality} for different degrees of $\vec b$ we find the following: In case of $\deg \vec b < 2$, the only solution is $\vec r_1(t) = (0, 0, 0)^T$. In case of $2 \le \deg \vec b \le 5$ we obtain the trivial solution $\vec r_2(t)=(\tau_1,\tau_2,\tau_3)^T = \vec r_1(t) + (\tau_1,\tau_2,\tau_3)^T$ with constant $\tau_1$, $\tau_2$, $\tau_3 \in \R$. For $6 \le \deg \vec b \le 7$ the solutions space is of dimension four and consists of scalings and translations $\vec r_6(t) = \mu_6 \vec p_6(t) + (\tau_1,\tau_2,\tau_3)^T$ of one basis curve
  \begin{equation*}
    \vec p_6(t) = \frac{1}{t^3}
    \begin{pmatrix}
      t^6+3t^4+t^3+3t^2+1\\
      12t^4+t^3-12t^2\\
      3t^5+t^3+3t
    \end{pmatrix}.
  \end{equation*}
  Note that non-trivial polynomial PH curves are not among the solutions found thus far as they belong to $\deg \vec b \ge 8$. In case of $\deg \vec b = 8$, the solution space is of dimension five, the corresponding rational PH curves are given by $\vec r_8(t) = \nu_8 \vec p_8(t) + \vec r_6(t)$ where $\vec p_8(t) = \int \mathcal A \qi \Cj{\mathcal A} \;\mathrm{d}t$ is the usual quintic polynomial PH curve. The solution space for $\deg \vec b = 9$ is of dimension six, the corresponding rational PH curves are given by $\vec r_9(t) = \nu_9 \vec p_9(t) + \vec r_8(t)$ where $\vec p_9(t) = \int t \mathcal A \qi \Cj{\mathcal A} \;\mathrm{d}t$.
\end{example}

\begin{remark}
  Not unexpectedly, the recursion of Example~\ref{ex:simple} continues as $\vec r_m(t) = \nu_m\vec p_m(t) + \vec r_{m-1}(t)$ where $\vec p_m(t) = \int t^{m-8} \mathcal A \qi \Cj{\mathcal A} \;\mathrm{d}t$. We refrain from formally proving this in this paper. A similar statement for $\alpha$ having several linear factors is a topic of future research. These observations hint at the existence of nested solution spaces with a clear recursive generation.
\end{remark}

\begin{example}
  \label{ex:n2}
  In this example we pick again $\mathcal A  = t^2 - (\qi + \qj)t + \qk$ but select
  $\alpha = (t+1)^2$. With $\vec F \coloneqq \mathcal A \qi
  \mathcal A^\ast$ and $\beta = t+1$ we have $n=2$ and $\vec F = \vec f_0 + \vec f_1\beta +
  \vec f_2\beta^2 + \vec f_3\beta^3 + \vec f_4\beta^4$ where
  \begin{gather*}
    \vec f_0 = 4 \qj,\quad
    \vec f_1 = -4 \qi - 8 \qj + 4 \qk,\quad
    \vec f_2 = 6 \qi + 4 \qj - 6 \qk,\quad
    \vec f_3 = -4 \qi + 2 \qk,\quad
    \vec f_4 = \qi.
  \end{gather*}
  We observe that $\det(\vec f_0,\vec f_1,\vec f_2) = 0$ so that we can indeed predict rational solutions by Lemma~\ref{lem:factors-2}. Computing solutions for increasing degree of $\vec b$ we find $\vec r_0(t) = \vec r_1(t) = (0,0,0)^T$ for $0 \le \deg \vec b \le 1$. For $2 \le \deg \vec b \le 5$ we find the trivial solutions $\vec r_2(t) = \vec r_1(t) + (\tau_1,\tau_2,\tau_3) = (\tau_1,\tau_2,\tau_3)$. The first non-trivial solutions occur for $\deg \vec b = 6$. They are given by $\vec r_3(t) = \mu_3 \vec r(t) + \vec r_2(t)$, that is, they are obtained from the prototype solution
  \begin{equation*}
    \vec r(t) = \frac{1}{(t+1)^2}
    \begin{pmatrix}
      -6t^6-10t^5-2t^4+8t^3-2t^2-70t-62\\
      -48t^4-72t^3\\
      -16t^5-26t^4+44t^3-108t-54
  \end{pmatrix}
  \end{equation*}
  by scaling and translation. We were not aware of an instance of a rational PH
  curve satisfying the condition of Lemma~\ref{lem:factors-2} in prior
  literature. The rational PH curve $\vec r(t)$ and a quintic polynomial PH
  curve $\vec p(t)$ with identical start point $\vec{r}(0) = \vec{p}(0)$ are
  displayed in Figure~\ref{fig:1}. The respective end points $\vec r(1)$ and
  $\vec p(1)$ are different and suitable linear combinations of $\vec r(t)$ and
  $\vec p(t)$ would allow us to reach further end points. Adding more
  rational PH curves with the same tangent indicatrix increases the vector space
  of solutions and would allow to solve various interpolation problems. The
  right-hand side in Figure~\ref{fig:1} shows the portion of the tangent
  indicatrix over the parameter interval $[-3,0.5]$. As expected, the point to
  parameter value $t = -1$ is, indeed, a spherical inflection point. The
  inflection tangent in this geometry, a great circular arc, is displayed as
  well.
\end{example}

\begin{figure}
  \centering
  \begin{minipage}{0.55\linewidth}
    \centering
    \includegraphics[]{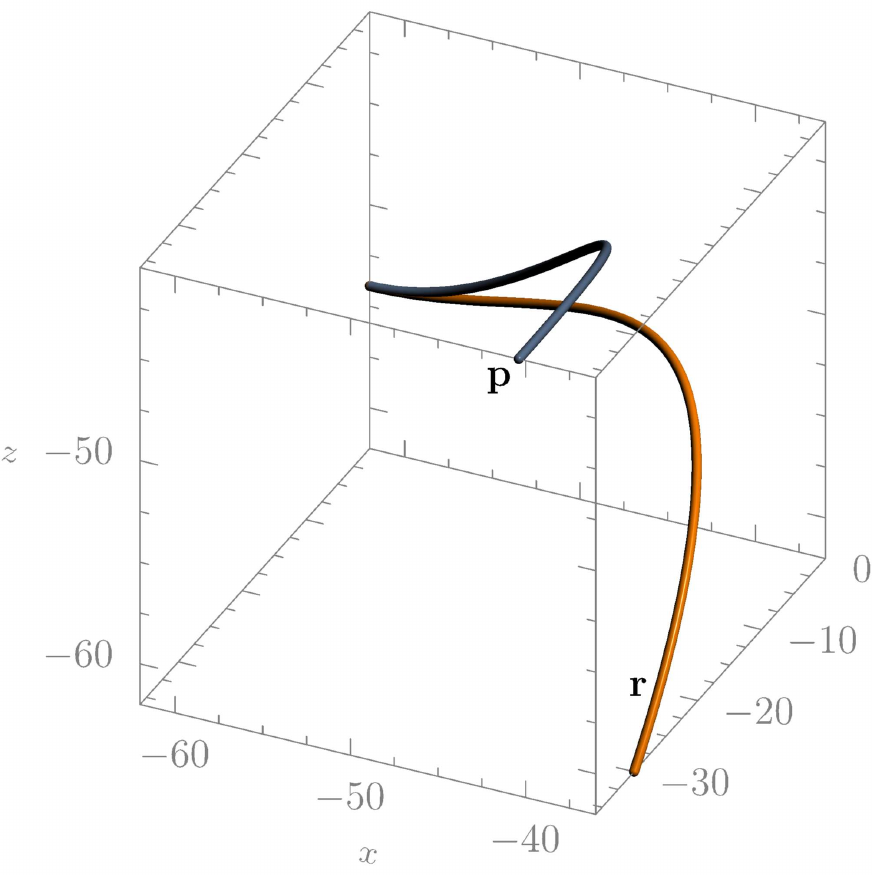}
  \end{minipage}\begin{minipage}{0.45\linewidth}
    \centering
    \includegraphics[]{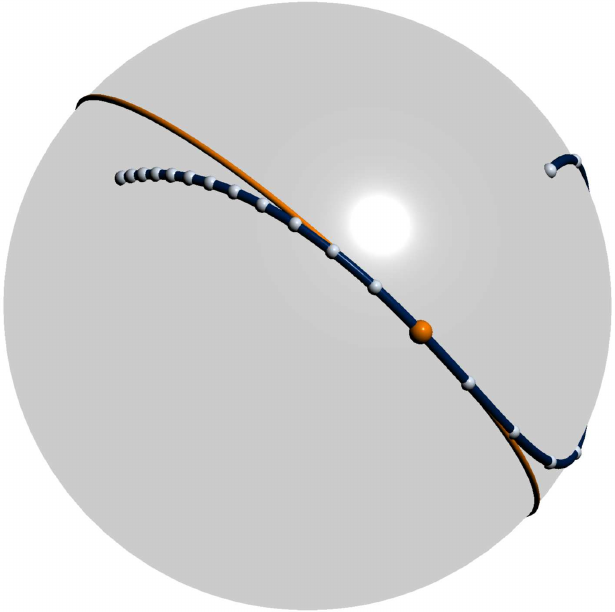}
  \end{minipage}\caption{Rational PH curve $\vec r$, polynomial PH curve $\vec p$ (left) and
    their common spherical tangent indicatrix (right). While the PH curves are
    parametrized over $[0,1]$, the parameter interval for the tangent indicatrix is
    $[-3,0.5]$ in order to illustrate the inflection point at $t = -1$.}
  \label{fig:1}
\end{figure}

\begin{example}
  \label{ex:krajnc1}
  In \cite[Theorem~7]{krajnc1} the authors present a family of rational PH curves of the particularly low degree three. One example of theirs is
  \begin{equation}
    \label{eq:ex-krajnc1}
    \vec r(t) = \frac{-1}{60(t^2+1)}
    \begin{pmatrix}
      t(t^2-4)\\
      2t(3t-1)\\
      t(3t+4)
    \end{pmatrix}.
  \end{equation}
  In order to reconstruct it, we use $\alpha = 60(t^2+1)$ and $\mathcal A = t^2+3t\qi
  +2\qj+\qk-1$ which is provided by \cite[Theorem~7]{krajnc1} as well. With $\vec F
  \coloneqq \mathcal A \qi \mathcal A^\ast$ and $\beta = t - \ci$ we have $n = 1$ and $\vec F = \vec f_0 +
  \vec f_1\beta + \vec f_2\beta^2 + \vec f_3\beta^3 + \vec f_4\beta^4$ where
  \begin{gather*}
    \vec f_0 = -10\qi - (4 - 12\ci)\qj + (8 + 6\ci)\qk,\quad
    \vec f_1 = (10\ci)\qi + (12 + 4\ci)\qj + (6 - 8\ci)\qk,\\
    \vec f_2 = \qi + 2\qj - 4\qk,\quad
    \vec f_3 = (4\ci)\qi,\quad
    \vec f_4 = \qi.
  \end{gather*}
  We observe that $\rank(\vec f_0,\vec f_1) = 1$ so that we can indeed predict non-polynomial solutions by Lemma~\ref{lem:factors-1}. Let us proceed by computing solutions for increasing degree of $\vec b$. For $0 \le \deg \vec b \le 1$ we obtain $\vec r_0(t) = \vec r_1(t) = (0,0,0)^T$. Already for $\deg \vec b = 2$ we find the translations $\vec r_2(t) = \vec r_1(t) + (\tau_1,\tau_2,\tau_3)^T = (\tau_1,\tau_2,\tau_3)$ of $\vec r_1(t)$. The first non-trivial solution occurs indeed for $\deg \vec b = 3$. The solution family is obtained from \eqref{eq:ex-krajnc1} by scaling and translation: $\vec r_3(t) = \mu_3\vec r(t) + \vec r_2(t)$. Note that our proof of existence of non-polynomial solutions requires complex numbers but actually real solutions are computed from a real system of linear equations.
\end{example}

\begin{example}
  In \cite[Example~1]{FaroukiSir2} a rational PH curve
  \begin{equation}
    \label{eq:ex-FaroukiSir2}
    \vec r(t)=\frac{800}{(t+10)^5}
    \begin{pmatrix}
      -13420\,t^5+4000\,t^4+30000\,t^3-50000\,t^2+25000\,t\\
      46643\,t^5-67850\,t^4-7000\,t^3+30000\,t^2\\
      19776\,t^5-111200\,t^4+126000\,t^3-40000\,t^2
    \end{pmatrix}
  \end{equation}
  is constructed using essentially formula \eqref{eq:tr2}. We have $\alpha = (t+10)^5$, $\beta = t + 10$ and $n = 5$ whence we are in the case of Lemma~\ref{lem:factors-n} again. The spherical motion of the Euler-Rodrigues frame is provided by \cite{FaroukiSir2}:
  \begin{equation*}
    \mathcal A=(7 t^2-22 t+10)+(-19 t^2+14 t)\qi+(-26 t^2+16 t)\qj+(-2t^2+12 t)\qk.
  \end{equation*}
  We have $\vec F \coloneqq \mathcal A \qi \mathcal A^\ast = \sum_{i=0}^4 \vec f_i\beta^i$. Using the system \eqref{eq:orthogonality} we can now compute solutions for $\vec b$. Already for $\deg \vec b = 4$ we find the non-polynomial solution
  \begin{equation}
    \label{eq:ex1-r4}
    \vec p_4(t) =
    \frac{1}{(t+10)^5}
    \begin{pmatrix}
      -27t^4-538t^3-5366t^2-26841t-53680\\
      96t^4+1866t^3+18656t^2+93286t+186572\\
      44t^4+786t^3+7912t^2+39552t+79104
    \end{pmatrix}.
  \end{equation}
  Obviously, every uniform scaling of $\vec p_4(t)$ is a solution as well. For $\deg
  \vec b = 5$ we obtain a solution space of dimension four, consisting of the
  scalings and translations of $\vec p_4(t)$. The general solution can be written as
  $\vec r_5(t) = \mu_4 \vec p_4(t) + (\tau_1,\tau_2,\tau_3)^T$. The curve $\vec r(t)$ in
  Equation~\eqref{eq:ex-FaroukiSir2} is contained in this space with
  \begin{equation*}
    \mu_4 = -20000000,\quad
    \tau_1 = -10736000,\quad
    \tau_2 = 37314400,\quad
    \tau_3 = 15820800.
  \end{equation*}
\end{example}

\begin{example}
  \label{ex:FaroukiSakkalis2019}
  The paper \cite{FaroukiSakkalis2019} is devoted principally to the planar rational PH curves with rational arc-length function. \cite[Example 9]{FaroukiSakkalis2019} however deals with spatial curves and the rational PH curve
  \begin{equation}
    \label{eq:ex-FaroukiSakkalis2019}
    \vec r(t)=\frac{1}{3t(t-1)}
    \begin{pmatrix}
      -4 t^5+22 t^4-18 t^3-10 t^2+34 t\\
      -4 t^5+4 t^4+12t^3+26 t^2-2 t-24\\
      -2 t^5+2 t^4+36 t^3-32 t^2-46 t+18
    \end{pmatrix}
  \end{equation}
  is constructed via the direct integration. It corresponds to the spherical
  motion given by the polynomial
  \begin{equation*}
    \mathcal A = (-t^2+t-1)+(t^3-2 t+2)\qi+(-2 t^3+3 t^2+t-1)\qj+(-t^3+4t^2-2 t+2)\qk.
  \end{equation*}
  Moreover, we have $\alpha = 3t(t-1)$, $n = 1$ and $\beta = t$ or $\beta = t-1$. With $\vec F \coloneqq \mathcal A \qi \mathcal A^\ast$ we have
  \begin{equation*}
    \vec F = (-8\qj + 6\qk)t^0 + (16\qj - 12\qk)t + \cdots
      = (-8\qi - 4\qj + 8\qk)(t-1)^0 + (-16\qi - 8\qj + 16\qk)(t-1)^1 + \cdots
  \end{equation*}
  and can confirm that both choices for $\beta$ satisfy the conditions of
  Lemma~\ref{lem:factors-1}. Once more, we compute solutions via
  \eqref{eq:orthogonality}. For $2 \le \deg \vec b \le 4$ we obtain only trivial
  solutions. For $\deg \vec b = 5$, the solution space is of dimension four. It
  consists of all scaled and translated copies of the curve
  \eqref{eq:ex-FaroukiSakkalis2019}.
\end{example}

Let us finish this section with a remarkable additive decomposition of the
rational PH curve \eqref{eq:ex-FaroukiSakkalis2019}. We have $\vec r(t) = \vec p(t) + \vec q(t)$ where
\begin{equation*}
  \begin{aligned}
    \vec p(t) &=
    \frac{1}{15t}
    \begin{pmatrix}
      -20t^7+108t^6-135t^5-20t^4-30t^3-120t^2-170t\\
      -20t^7+36t^6+30t^5-80t^4-180t^3+360t^2+490t+120\\
      -10t^7+18t^6+90 t^5-190t^4+210t^3+30t^2-130t-90
    \end{pmatrix}\quad\text{and}\\
    \vec q(t) &=
    \frac{1}{15(t-1)}
    \begin{pmatrix}
      20t^7-128t^6+243t^5-135t^4+120t^3\\
      20t^7-56t^6+6t^5+90t^4+120t^3-480t^2+360\\
      10t^7-28t^6-72t^5+270t^4-390t^3+ 360t^2-270
    \end{pmatrix},
  \end{aligned}
\end{equation*}
are both rational PH curves with the same rotation $\mathcal A$ but with simpler
denominator polynomials. We found it computing the solution spaces with the same
quaternion polynomial $\mathcal A$ but with different choices for the denominator
polynomial $\alpha$. We believe that this additive decomposition of a rational
PH curve with denominator $\alpha = 3t(t-1)$ into rational PH curves with
respective denominators $t$ and $t-1$ is only one manifestation of a more
general pattern. Moreover, we expect that the difference between the minimal
degree of a polynomial solution and the minimal degree of a rational PH curve
increases with $n$, and we conjecture that the presence of several linear
factors $\beta$ of $\alpha$ that meet criteria for existence of rational
solutions further increase this difference.

\section{Conclusion}

In this paper we managed to connect the theory of Pythagorean Hodograph curves with the theory of rational framing motions. First benefit of this connection is conceptual. It seems to be appropriate to understand rational spatial PH curves as (trajectories of) framing rational motions (akin curves with rational offsets in the planar case). Second benefit is descriptive and computational. The full set of rational PH curves is described using special motion polynomials in Theorem \ref{th:system-of-equations}. Comparing to the existing results this description contains linear constraints but brings considerable advantages. In particular we were able to understand the cancellation between the numerator and denominator of the PH curves (appearing as a common factor of $\alpha$ and $\vec b$). In Theorem~\ref{th:non-polynomial-solutions} and Remark~\ref{rem:polynomial-solutions} we fully described when this cancellation leads to polynomial curves only.

We are hopeful that our approach will lead to a number of additional results. As a future research we plan to fully describe the basis of the linear system of PH curves with given tangent indicatrix. This opens new possibilities to solve interpolation problems. We have shown how to select infinitely many denominator polynomials $\alpha$ to given $\mathcal A$ such that truly rational solutions curves exist. Generically, the only restriction on $\alpha$ is existence of a factor of multiplicity three. We can choose a finitely generated subspace of this vector space of infinite dimension and add (linear) interpolation constraints. Polynomial PH curves are subsumed in this approach but rationality provides additional degrees of freedom. In this context, the better understanding of possible denominators will help us to avoid singular (cuspidal) interpolants. Finally, we plan to connect our results with the theory of the rotation minimizing frames or PH curves with rational arc length.

\section*{Acknowledgement}

Zbyněk Šír was supported by the grant 20--11473S of the Czech Science Foundation. Bahar Kalkan was supported by the BIDEB 2211-E scholarship programme of The Scientific and Technological Research Council of Turkey.

\bibliographystyle{plainnat}

\end{document}